\newcommand{\abs}[1]{\left|#1\right|}
\newcommand{\eps}{\varepsilon}
\renewcommand{\d}{\mathrm{d}}
\renewcommand{\P}{\mathbb{P}}
\newcommand{\Plr}[1]{\P\left( #1\right)}
\newcommand{\E}{\mathbb{E}}
\newcommand{\Ind}[1]{\mathds{1}_{#1}}
\newcommand{\Om}{\Omega}
\newcommand{\vphi}{\varphi}
\newcommand{\mc}[1]{\mathcal{#1}}
\newcommand{\cB}{\mc{B}}
\newcommand{\cO}{\mc{O}}
\newcommand{\norm}[1]{\left\lVert #1\right\rVert}
\renewcommand{\sp}[2]{\left\langle #1,#2 \right\rangle}
\newcommand{\Hsob}{{H^1_0}}
\newcommand{\Hd}{{H^{-1}}}
\newcommand{\dup}[4]{{}_{#1}{\left\langle #2, #3 \right\rangle}_{#4}}
\newcommand{\N}{\mathbb{N}}
\newcommand{\R}{\mathbb{R}}
\newcommand{\ie}{i.\,e.\ }
\newcommand{\eg}{e.\,g.\ }
\newtheorem{thm}{Theorem}[section]
\newtheorem{prop}[thm]{Proposition}
\newtheorem{lem}[thm]{Lemma}
\newtheorem{Def}[thm]{Definition}
\newtheorem{Cor}[thm]{Corollary}
\theoremstyle{remark}
\newtheorem{Rem}[thm]{Remark}
\DeclareRobustCommand\widecheck[1]{{\mathpalette\@widecheck{#1}}}
\def\@widecheck#1#2{%
    \setbox\z@\hbox{\m@th$#1#2$}%
    \setbox\tw@\hbox{\m@th$#1%
       \widehat{%
          \vrule\@width\z@\@height\ht\z@
          \vrule\@height\z@\@width\wd\z@}$}%
    \dp\tw@-\ht\z@
    \@tempdima\ht\z@ \advance\@tempdima2\ht\tw@ \divide\@tempdima\thr@@
    \setbox\tw@\hbox{%
       \raise\@tempdima\hbox{\scalebox{1}[-1]{\lower\@tempdima\box
\tw@}}}%
    {\ooalign{\box\tw@ \cr \box\z@}}}
\renewcommand{\Ind}[1]{\mathrm{\textbf{1}}_{#1}}
\newcommand{\tix}{{\tilde{x}}}
\newcommand{\tiy}{\tilde{y}}
\newcommand{\g}{g}
\newcommand{\ome}{{\omega_1}}
\newcommand{\omz}{{\omega_2}}
\newcommand{\xeps}{{x,\eps}}
\title{Ergodicity for singular-degenerate porous media equations}
\author{Marius Neuß}
\date{}
\numberwithin{equation}{section}
\begin{document}
\maketitle

\begin{abstract}
  The long time behaviour of solutions to generalised
  stochastic porous media equations on bounded domains with Dirichlet
  boundary data is studied. We focus on a degenerate form of nonlinearity
  arising in self-organised criticality. Based on the
  so-called lower-bound method, the existence and uniqueness of an invariant
  measure is proved.

\noindent \textsc{Keywords}: Generalised stochastic porous medium,
invariant measures, ergodicity, self-organised criticality

\noindent \textsc{MSC 2010}: 37A25, 76S99, 35K59, 60H15
\end{abstract}

\section{Introduction}
We consider the singular-degenerate generalised stochastic porous medium equation
\begin{align}\label{eq:33}
  \begin{split}
    \d X_{t}&\in\ \Delta(\phi(X_{t}))\d t+B \d W_{t},\\
    X_0 &=\  x_0,
  \end{split}
\end{align}
on a bounded interval $\cO\subseteq \R$ with zero Dirichlet boundary
conditions. The multi-valued function $\phi$ is the maximal monotone
extension of
\begin{equation}\label{eq:49}
  \R\ni x\mapsto x \Ind{\{\abs{x} > 1\}},
\end{equation}
$W$ is a cylindrical Wiener process on some separable Hilbert space $U$,
and the diffusion coefficient $B$ is an $L^2(\cO)$-valued Hilbert-Schmidt
operator satisfying a non-degeneracy condition (see
\eqref{eq:21} below). Equation \eqref{eq:33} is understood as an evolution
equation on $\Hd$, the dual of $\Hsob(\cO)$, where it can be solved uniquely in
the sense of SVI solutions, as shown in \cite{Neuss}. The main result of
the present work is the existence and uniqueness of an invariant
probability measure for solutions to \eqref{eq:33}.

The above form of stochastic porous media equations is motivated by the
analysis of non-equilibrium systems, appearing in the context of
self-organised criticality (for a survey, see \eg
\cite{Watkins-Pruessner}). Self-organised criticality is a statistical
property of systems displaying intermittent events, such as earthquakes,
which are activated when the underlying system locally exceeds a
threshold. These dynamics are reflected by the discontinuity and degeneracy of the
nonlinearity $\phi$ above. In order to get a better understanding of the long-time
behaviour of these systems, we prove the existence of a unique
non-equilibrium statistical invariant state for \eqref{eq:33}. Since this
is the candidate to which the transition probabilities are expected to
converge for long times, it is the key object for the statistical behaviour
of the respective process.

A previous approach to the long-time behaviour of Markov processes stemming
from monotone SPDEs with singular drift, by which the present article is
inspired, is \cite{G-T-ergodicity}, which in turn uses the more abstract
framework of \cite{G-T-multivalued}. In these works, the existence and
uniqueness of invariant probability measures to stochastic local and
non-local $p$-Laplace equations is proved, where the multivalued regime
$p=1$ is included. In one dimension, the paradigmatic case is the equation
\begin{equation}\label{eq:48}
  \d X_t = \Delta (\mathrm{sgn}(X_t)) + \d W_t,
\end{equation}
where $\mathrm{sgn}$ denotes the maximal monotone extension of the classical
sign function. The proof relies on
sufficient criteria from \cite{Komorowski}, where the so-called lower-bound
technique has been extended to Polish spaces which are not necessarily
locally compact. This technique relies on the existence of a state being an
accessible point for the time averages of the transition probabilities
uniformly in time, and the so-called ``e-property'', which is a uniform
continuity assumption on the Markov semigroup. To verify these criteria,
the focus of \cite{G-T-ergodicity} rests on energy estimates to first bound
the mass of these averages to $L^m$ balls for some suitably chosen
$m\in (2,3]$. As a next step, the convergence to a chosen
accessible state with probability bounded below is shown, which is done by
comparing the solution of \eqref{eq:48} to a control process, which obeys
the mere deterministic dynamics of \eqref{eq:48}, \ie
\begin{align}\label{eq:50}
  \begin{split}
    \frac{\d}{\d t} X_t &= \Delta (\mathrm{sgn}(X_t)),\\
    X_0 &= y,
  \end{split}
\end{align}
for $y\in L^m, \norm{y}_m \leq R$ for some $R>0$. In this, simpler setting
than \eqref{eq:33}, there is a unique limiting state to \eqref{eq:50} which
is a natural candidate for the aforementioned accessible point.

In the present article, we aim to prove the existence and uniqueness of an
invariant probability measure by similar ideas. While energy estimates for
\eqref{eq:33} are easier to obtain due to the linear growth of $\phi$
(cf.\,\eqref{eq:49}) at $\pm \infty$, the degenerate form of the
nonlinearity destroys the convergence of the noise-free system to a unique
fixed point. This is why we have to add a forcing term to the control
process and rely on a more refined deterministic analysis of the
resulting inhomogeneous monotone evolution equation. To guarantee the
convergence of this modified control process, the forcing term has to be
sufficiently non-degenerate, and as the connection of the solution to
\eqref{eq:33} to the control process only works if the noise is ``close''
to the deterministic forcing with non-zero probability, this relies on
some non-degeneracy requirements on the
noise. As in\cite{G-T-ergodicity}, it is important
that the convergence of the deterministic process takes place uniformly for
initial values in sets of bounded energy. We tackle this problem with
the help of a comparison principle, which, however, only works if the
energy actually controls the $L^\infty$ norm. This leads to the restriction
to one spatial dimension. Finally, most of the above-mentioned steps
have to be argued on an approximate level due to the singularity of the
drift, so that stability of the statements under these approximations also
has to be ensured.

\subsection{Literature}
The well-posedness of SPDEs with monotone, multivalued drift has been
investigated by \cite{BDPrR-existence-strong} and
\cite{BDPrR-SPMEandSOC}. The concept of stochastic variational inequalities
(SVIs) and a corresponding notion of solution has been established in
\cite{BDPrR-Diffusivity} and \cite{B-R-SVITVF}, and has been applied to
generalised stochastic $p$-Laplace equations in \cite{G-R-plaplace} and to
generalised stochastic fast diffusion equations in \cite{G-R}. Finally, the existence
and uniqueness of SVI solutions to \eqref{eq:33} follows from a more
general well-posedness analysis in \cite{Neuss}.

We now aim to give a brief overview on the existing results on ergodicity
of stochastic nonlinear diffusions, with a focus on approaches applicable to
stochastic (generalised) porous media equations.

In the ``classical'' approach, \eg in the monograph
\cite{DPr-Zab-Ergodicity}, the existence of invariant measures to
semilinear SPDEs with non-degenerate noise is proven by bounds that imply
the tightness of the averaged transition probabilities, allowing to use the
Krylov-Bogoliubov theorem. Uniqueness is then relying on the
Doob-Khasminskii theorem, using the regularity of the Markov semigroup
which can be guaranteed by the strong Feller property and
irreducibility. This technique has been considerably improved by
\cite{Hairer-Matt-NS}, using smoothing in form of the asymptotic Feller
property, though the scope was still on semilinear equations.

Invariant measures to quasilinear diffusions with additive noise have been
initially studied in \cite{DPrR-Invariant} and \cite{DPrR-weaksolns} on the
level of Kolmogorov equations. In \cite{DPrRRW} (see also the monograph
\cite{BDPrR-SPMEbook}), the strong monotonicity of the porous medium
operator was exploited, which leads to the existence and uniqueness of
invariant measures by strong dissipativity.

In the situation of weakly monotone drift operators, there have been
several approaches to obtain contraction estimates which ensure
ergodicity, \eg via Harnack inequalities (cf.\,\cite{Wang2015, Wang2007}),
weighted $L^1$ dissipativity (cf.\,\cite{G-D-T}) or lower-bound techniques
(cf.\,\cite{Lasota-Szarek}, \cite{Komorowski}). We note that the first
approach also works for a partly multiplicative noise and the second one
even for full multiplicative noise. The last-mentioned approach was used by
\cite{G-T-multivalued} and \cite{G-T-ergodicity}, where generalised porous media
equations with discontinuous nonlinearities are analysed as explained
above.

A different approach to the long-time behaviour of solutions to SPDEs is to
analyse the existence and the structure of random attractors of random
dynamical systems, as \eg in
\cite{Crauel-Flandoli,CDF,Gess-RAdegenerate,Gess-Liu-Roeckner,BGLR,Gess-Liu-Schenke}. A
property which has turned out to be very useful in this context is
order preservation of trajectories which are driven by the same noise, see,
e.\,g., \cite{Gess-RAsingular,Arnold-Chueshov,FGS17-orderpres,Butkovsky-S}. A close connection between random attractors and ergodic and mixing
properties of random dynamical systems can be obtained in the case of
synchronisation (see \cite{Cranston-Gess-Scheutzow}), which is on hand if the random
attractor is a singleton. This case has been investigated in, e.\,g., \cite{Chueskov-Scheutzow,FGS17,FGS17-orderpres,Rosati,Butkovsky-S}.

Last but not least, we mention \cite{Barbu-Cellular, BDPrR-FTE, Gess-FTE},
where similar equations are considered under multiplicative noise, leading
to finite-time absorption of the process into a subcritical region.

\subsection{Structure of the paper}
After stating the exact setting in the first part of section
\ref{sec:setting}, we state the main result of this article, Theorem
\ref{main-result} at the end of section \ref{sec:setting}. Section
\ref{sec:lemmas-proof} then collects auxiliary results in the natural order
of the argumentation, which finally allow to prove Theorem
\ref{main-result}.

\subsection{Notation}
On a bounded open set $\cO\subset \R$, we use the classical notations
$L^p:=L^p(\cO)$ for the Lebesgue space with exponent $p\in[1,\infty]$ and
$\Hsob := \Hsob(\cO)$ for the Sobolev space of weakly differentiable
functions with exponent $2$ and zero trace. The norm on $L^p$ will be
denoted by $\norm{\cdot}_p$. A bounded operator $T: U\to H$,
where $U$ and $H$ are separable Hilbert spaces, is called Hilbert-Schmidt if
\begin{displaymath}
  \norm{T}_{L_2(U,H)} := \sum_{k\in\N} \norm{T e_k}_H^2 < \infty,
\end{displaymath}
where $(e_k)_{k\in\N}$ is an orthonormal basis of $U$. For a Hilbert space
$H$, $C_b(H)$ denotes the space of bounded continuous functions, $\cB(H)$
denotes the Borel $\sigma$-algebra, and $\cB_b(H)$ the set of bounded
functions $H\to\R$ which are $\cB(H)$-$\cB(\R)$-measurable. Multivalued
operators on $H$, which arise in this work as subdifferentials of proper,
convex and lower-semicontinuous functionals, are mappings
$A: H \to 2^H$. We define the domain of $A$ by
\begin{displaymath}
  D(A) := \left\{x\in H: A(x) \neq \emptyset\right\}
\end{displaymath}
and its range by
\begin{displaymath}
  R(A) := \bigcup_{x\in H} A(x).
\end{displaymath}
For a metric space
$V$ and $r>0$, we denote by $B_r^V$ the open ball with radius $r$ with
respect to the corresponding metric. If $V=L^\infty$, we use $B_r^\infty$
for $B_r^{L^\infty}$. Within term manipulations, the constant $C$ may vary
from line to line.

\section{Setting and main result} \label{sec:setting}
We consider a one-dimensional open bounded interval $\cO \subset \R$ as the
underlying domain. For simplicity, set $\cO := (-1,1)$. 

Define by $\phi: \R\to 2^\R$ the multi-valued maximally monotone extension of
\begin{displaymath}
  \R\ni x \mapsto x \Ind{\{\abs{x}>1\}},
\end{displaymath}
and let $\psi: \R\to\R$ be its anti-derivative with $\psi(0) = 0$, \ie
\begin{displaymath}
  \psi(x) = \frac1{2}(\abs{x}^2 -1)\Ind{\{\abs{x}>1\}}.
\end{displaymath} 
Let furthermore $\vphi: \Hd\to [0,\infty]$ be defined as
\begin{equation}\label{eq:14}
  \vphi(u) =
  \begin{cases}
    \int_\cO \psi(u)\,\d x, \quad &\text{if }u\in L^2\\
    +\infty,\quad &\text{else.}
  \end{cases}
\end{equation}
and consider the SPDE
\begin{align}\label{eq:1}
  \begin{split}
    \d X^x_t &\in -\partial \vphi(X^x_t)\,\d t + B\d W_t,\\
    X^x_0 &= x,
  \end{split}
\end{align}
where $x\in\Hd$, $W$ is an $\mathrm{Id}$-cylindrical Wiener process on some separable Hilbert
space $U$ and $B\in L_2(U, L^2)$ a Hilbert-Schmidt
operator. This leads to $B W_t$ being a trace-class Wiener process in
$L^2$, such that there are mutually orthogonal $L^2$ functions
$(\xi_k)_{k\in\N}$ with
\begin{equation}\label{eq:51}
\sum_{k\in\N}\norm{\xi_k}^2_2 < \infty,
\end{equation}
for which
\begin{equation} \label{eq:25}
  B W_t = \sum_{i=1}^\infty \beta_k(t) \xi_k,
\end{equation}
where $(\beta_k)_{k\in\N}$ are independent one-dimensional standard
Brownian motions. Additionally, we impose that there are $m\in\N$, $c_1, \dots,
c_m\in\R$ such that
\begin{equation}\label{eq:21}
  \g\in L^2, \quad\g(x) := \sum_{k=1}^m c_k \xi_k(x) > 1\quad\text{for almost all }x\in\cO.
\end{equation}
Note that the well-posedness of the SPDE \eqref{eq:1} has been shown in
\cite{Neuss} in the sense of SVI-solutions, identifying $x$ with an
almost surely constant random variable $x\in L^2(\Om, \Hd)$. The process constructed there
gives rise to a semigroup $(P_t)_{t\geq 0}$ of Markov transition kernels by
\begin{equation}\label{eq:59}
    P_t (x, A) = \E \Ind{A}(X_t^x)\quad \text{for }x\in\Hd \text{ and }A\in
    \cB(\Hd),
  \end{equation}
  which will be shown below in Lemma \ref{Markov-prop}. By a slight abuse
  of notation, we will denote the induced semigroup on $\cB_b(\Hd)$ also by
  $P_t$, \ie
  \begin{equation}\label{eq:60}
    P_tf (x) = \int_\Hd f(y) P_t(x, \d y) \quad \text{for }f\in \cB_b(\Hd),\,
    x\in \Hd.
  \end{equation}

The main result of this article is the following:
\begin{thm}\label{main-result}
  In the setting described above, the semigroup $(P_t)_{t\geq 0}$ admits a
  unique invariant probability Borel measure $\mu$ on $\Hd$, \ie for all
  $f\in C_b(\Hd)$ we have
  \begin{displaymath}
    \int_\Hd P_tf \d \mu = \int_\Hd f \d \mu.
  \end{displaymath}
\end{thm}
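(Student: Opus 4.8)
The plan is to verify, for the Markov semigroup $(P_t)_{t\geq0}$ constructed from \eqref{eq:1}, the hypotheses of the abstract lower-bound criterion for existence and uniqueness of invariant measures on Polish spaces in the form of \cite{Komorowski} (as exploited in \cite{G-T-ergodicity}). Writing $Q_T(x,\cdot) := \frac1T\int_0^T P_t(x,\cdot)\,\d t$ for the time-averaged transition probabilities, I would aim to establish three facts: (i) the \emph{e-property}, \ie equicontinuity of the family $\{P_tf\}_{t\geq 0}$ at every point for every bounded Lipschitz $f$; (ii) \emph{tightness} of $\{Q_T(x,\cdot)\}_{T\geq1}$ in $\Hd$ for a suitable $x$; and (iii) \emph{accessibility}, \ie the existence of one state $z\in\Hd$ such that for every $\eps>0$ and $R>0$ one has $\inf_{\norm{x}_{\Hd}\leq R}\liminf_{T\to\infty}Q_T\bigl(x,\{y\in\Hd:\norm{y-z}_{\Hd}<\eps\}\bigr)>0$. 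Granting these, the cited theorem delivers the unique invariant measure $\mu$.

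The e-property should follow directly from the monotone structure. Since $\partial\vphi$ is the subdifferential of the convex functional \eqref{eq:14}, two SVI solutions driven by the same noise are non-expansive in $\Hd$, so that one expects $\norm{X_t^x-X_t^y}_{\Hd}\leq\norm{x-y}_{\Hd}$ almost surely and hence $\abs{P_tf(x)-P_tf(y)}\leq\mathrm{Lip}(f)\,\norm{x-y}_{\Hd}$ uniformly in $t$, which is even stronger than the e-property. Tightness is where the linear growth of $\phi$ at $\pm\infty$ (cf.\,\eqref{eq:49}) helps: testing the equation against suitable functionals should yield energy estimates bounding $\E\,\frac1T\int_0^T\norm{X_t^x}_2^2\,\d t$ uniformly in $T$, and since bounded subsets of $L^2$ are relatively compact in $\Hd$, averaging together with Chebyshev's inequality gives tightness of $\{Q_T(x,\cdot)\}_T$.

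The heart of the argument, and the main obstacle, is accessibility. Here the degeneracy of $\phi$ on $[-1,1]$ prevents the noise-free dynamics from relaxing to a unique state, so I would replace the deterministic control \eqref{eq:50} by a \emph{forced} control process $\frac{\d}{\d t}Z_t=\Delta\phi(Z_t)+\g$, with $\g$ as in \eqref{eq:21}; the requirement $\g>1$ is precisely what pushes the control out of the degenerate region, rendering $\Delta\phi$ active and forcing convergence to a unique limit state $z$. The first step is a deterministic analysis of this inhomogeneous monotone evolution equation showing $Z_t\to z$ as $t\to\infty$ \emph{uniformly} over initial data in bounded-energy sets, the uniformity being essential for the $\inf_{\norm{x}_{\Hd}\leq R}$ in (iii). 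The tool for this is a comparison principle, which controls the solution pointwise and therefore needs the energy to dominate the $L^\infty$ norm; this dominance is available only through the one-dimensional Sobolev embedding $\Hsob\hookrightarrow L^\infty$, which is exactly what forces $\cO\subset\R$. The second step connects $X^x$ to $Z$: by \eqref{eq:21} the driving noise $BW$ can track the constant forcing $\g$ on a finite time window with strictly positive probability, and combining this with the $\Hd$-contractivity from step (i) shows that $X_t^x$ enters any $\Hd$-neighbourhood of $z$ with probability bounded below uniformly over $\norm{x}_{\Hd}\leq R$, which yields (iii).

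All of the above has to be argued on a regularised level, replacing $\phi$ by its Yosida approximation $\phi_\lambda$ (or a smoothed variant) admitting classical solutions, and then letting $\lambda\to0$; thus a recurring technical burden — and, after the uniform deterministic convergence itself, the step I expect to be most delicate — is the stability of the comparison estimates, of the uniform convergence, and of the positive-probability lower bound under this approximation. Once (i)--(iii) are secured uniformly in $\lambda$ and survive the limit, the lower-bound theorem of \cite{Komorowski} closes the argument: the e-property together with accessibility gives uniqueness, and together with tightness gives the existence of the invariant probability measure $\mu$.
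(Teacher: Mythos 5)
Your proposal follows the paper's architecture closely: the Komorowski--Peszat--Szarek lower-bound criterion, the e-property from almost-sure $\Hd$-contraction of the monotone flow, a forced control process $\frac{\d}{\d t}u=\Delta\phi(u)+\g$ with $\g>1$ as in \eqref{eq:21}, a comparison principle combined with the one-dimensional embedding, a Girsanov-type support argument for the noise tracking $t\g$, and Yosida approximation throughout. The genuine gap is that your steps (ii) and (iii) do not chain together. Your tightness statement concentrates the time-averaged laws on $L^2$-balls, while your accessibility statement is claimed uniformly over $\Hd$-balls; neither class of sets can feed your own accessibility argument. The comparison principle (Lemma \ref{comp-principle}, Corollary \ref{Cor-order}) controls solutions \emph{pointwise}, so it requires the initial datum to lie in (an $\Hd$-small perturbation of) an $L^\infty$-ball: only then can the approximate solution be sandwiched between the two solutions started from the constant profiles $\pm R$, which is where the uniformity over initial data actually comes from. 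An $L^2$-bound gives no pointwise control in any dimension, so accessibility ``from bounded-energy sets'' fails if energy means $L^2$; and uniformity over $\Hd$-balls is both unobtainable by these methods (the contraction of Lemma \ref{Contractivity-prop} transfers a lower bound only to small $\Hd$-neighbourhoods of points where it is already known, and $\Hd$-balls are not totally bounded; moreover any $L^2$-approximation of a rough $x\in\Hd$ has uncontrolled $L^2$-norm) and unnecessary, since the criterion \eqref{eq:56} is pointwise in $x$.

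What is missing is therefore the paper's key estimate (Lemma \ref{lem-energy-estimate}): the time averages concentrate, up to probability $\rho$, on the $\delta$-neighbourhood $C_\delta(R)$ in $\Hd$ of an $L^\infty$-ball. This does not follow from an $L^2$-bound; it is extracted from the structure of the dissipation. Ito's formula for $\norm{X^{\tix,\eps}_t}_2^2$ produces the term $-2\int_0^t\int_\cO (\phi^\eps)'(X)\abs{\nabla X}^2$, which is bounded above using $(\phi^\eps)'\geq\frac12\Ind{\R\setminus[-1,1]}$ and then rewritten as $-\int_0^t\norm{\nabla A(X_r)}_2^2\,\d r$ with $A(x)=\mathrm{sgn}(x)(\abs{x}-1)\Ind{\{\abs{x}>1\}}$; the embedding $\Hsob\hookrightarrow L^\infty$ is applied to $A(X_r)$ (not to $X_r$ itself, which is not uniformly in $\Hsob$), yielding $\E\int_0^t(\norm{X_r}_\infty-1)_+^2\,\d r\leq C(\norm{\tix}_2^2+t)$ and hence the $L^\infty$-concentration. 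With this lemma in place, accessibility only needs to be proven uniformly over $C_\delta(R)$ --- there your comparison/Girsanov argument does work, exactly as in Lemmas \ref{stability-approx}--\ref{conv-uinfty} --- and the semigroup property chains the two estimates into the pointwise bound \eqref{eq:56}. So your plan becomes correct once (ii) is replaced by the $L^\infty$-concentration estimate and (iii) is weakened to uniformity over $C_\delta(R)$ instead of over $\Hd$-balls.
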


We briefly mention the steps of the proof. After we introduce the main
approximating object $X^\xeps$ to solutions $X^x$ of \eqref{eq:1}, we prove
a contraction principle, \ie
\begin{displaymath}
  \P\left(\norm{X_T^x - X_T^y}_\Hd \leq \norm{x-y}_\Hd\right) = 1\quad
  \text{for all }T>0,
\end{displaymath}
which will be needed throughout the remaining proof. The lower-bound technique of \cite{Komorowski} is then applied in three steps: We first prove
that solutions to \eqref{eq:1} are likely to stay on average close to a ball in
$L^\infty$, \ie for $\rho, \delta > 0$ there exists an $R>0$ such that for
sufficiently large $T>0$
\begin{equation}\label{eq:30}
  \frac1{T}\int_0^T \P(X^x_r \in C_\delta(R))\,\d r \geq 1-\rho,
\end{equation}
where $C_\delta(R)$ is the $\delta$-neighbourhood of $B_R^\infty(0)$ in
$\Hd$. We then analyse the deterministic equation
\begin{align*}
  \begin{split}
    \frac{\d}{\d t} u^{\pm R} &= - \partial \vphi(u^{\pm R}) + \g,\\
    u^{\pm R}(0) &\equiv \pm R,
  \end{split}
\end{align*}
which will serve as the control process mentioned above and which converges
for large times to a limit $u_\infty\in\Hd$. Finally, we show that with positive
probability, $X^x$ behaves ``similar'' to $u^{\pm R}$ if $x\in
C_\delta(R)$, so that together with \eqref{eq:30} we can conclude that for
all $x\in \Hd$, $\delta>0$
\begin{displaymath}
  \liminf_{T\to\infty} \frac1{T} \int_0^T
  P_r\left(x,B_{2\delta}^\Hd(u_\infty)\right)\d r > 0,
\end{displaymath}
which implies the existence and uniqueness of an invariant measure by
\cite[Theorem 1]{Komorowski}.

\section{Lemmas and proof}\label{sec:lemmas-proof}
We recall the following notion from \cite{Komorowski}:
\begin{Def}\label{e-prop-Def}
  We say that a transition semigroup $(P_t)_{t\geq 0}$ on some Hilbert
  space $H$ has the e-property
  if the family of functions $(P_tf)_{t\geq 0}$ is equicontinuous at
  every point $x\in H$ for any bounded and Lipschitz continuous function
  $f: H\to\R$.
\end{Def}

As mentioned before, the proof of the main theorem relies on the following
sufficient condition of \cite{Komorowski}:

\begin{prop}[Komorowski-Peszat-Szarek 2010] \label{KPS} Let
  $(P_t)_{t\geq 0}$ be the transition semigroup of a stochastically
  continuous Markov process taking values on a separable Hilbert space $H$. Assume
  that $(P_t)_{t\geq 0}$ satisfies the Feller- and the
  e-property. Furthermore, assume that there exists $z\in H$ such that for
  every $\delta > 0$ and $x\in H$
  \begin{equation}\label{eq:56}
    \liminf_{T\to\infty} \frac1{T}\int_0^T P_r(x, B^H_\delta(z))\d r > 0.
  \end{equation}
  Then the semigroup $(P_t)_{t\geq 0}$ admits a unique, invariant probability Borel
  measure.
\end{prop}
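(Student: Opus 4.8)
Since Proposition \ref{KPS} is quoted essentially verbatim from \cite[Theorem 1]{Komorowski}, at the level of this paper its proof is a one-line reference, and the genuine labour lies in verifying its hypotheses for $(P_t)_{t\geq 0}$ in the subsequent sections. For orientation, though, let me sketch the \emph{lower-bound argument} by which such a criterion is established. The natural arena is the space of Borel probability measures on $H$ equipped with the dual-Lipschitz (Fortet--Mourier) distance $d(\mu,\nu) = \sup\{\abs{\int_H f\,\d\mu - \int_H f\,\d\nu} : \norm{f}_\infty \leq 1,\ f\ \text{1-Lipschitz}\}$, under which the e-property of Definition \ref{e-prop-Def} amounts to equicontinuity of the family $(P_r f)_{r\geq 0}$ for bounded Lipschitz $f$; this is the continuity that will permit passing limits through the evolution. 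Throughout one works with the Cesàro averages $Q_T^x(\cdot) := \frac1T\int_0^T P_r(x,\cdot)\,\d r$, so that the accessibility assumption \eqref{eq:56} reads $\liminf_{T\to\infty} Q_T^x(B_\delta^H(z)) > 0$ for all $x$ and $\delta$.

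Uniqueness is the heart of the lower-bound method and I would treat it first. If $\mu_1,\mu_2$ are invariant, then invariance together with \eqref{eq:56} forces $\mu_i(B_\delta^H(z)) > 0$ for every $\delta$, so both measures charge every neighbourhood of the common point $z$. The e-property provides, for each $\eps$, a radius $\delta$ with $\abs{P_r f(x) - P_r f(y)} < \eps$ for all $x,y\in B_\delta^H(z)$ and all $r$, i.e.\ orbits issued near $z$ stay uniformly close for all time. This ``communication through $z$'', fed into the standard lower-bound lemma via $\int f\,\d\mu_i = \int Q_T^x f\,\mu_i(\d x)$, shows that $\abs{\int f\,\d\mu_1 - \int f\,\d\mu_2}$ can be made arbitrarily small for every bounded Lipschitz $f$, whence $\mu_1 = \mu_2$.

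For existence I would extract an invariant measure as a weak-$\ast$ limit point of the averages $\{Q_T^x\}_T$. The accessibility \eqref{eq:56} guarantees that a uniformly positive amount of mass stays near $z$ along these averages, and the e-property upgrades this local non-escape into the concentration needed to produce a limit; the Feller property and stochastic continuity then ensure, via a Krylov--Bogoliubov computation, that any such limit is fixed by every $P_t^\ast$.

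The main obstacle is exactly this existence step, and it is why \cite{Komorowski} is invoked rather than the classical Lasota--Szarek theory: in the infinite-dimensional space $H = \Hd$ the balls $B_\delta^H(z)$ are not precompact, so the pointwise lower bound \eqref{eq:56} does \emph{not} yield tightness of $\{Q_T^x\}$ for free. Converting equicontinuity together with accessibility into a genuinely convergent subsequence of averaged measures in the absence of local compactness is the technical core of the criterion. In the present application that abstract difficulty is already resolved by the quoted theorem, so the real work is shifted entirely onto checking its three hypotheses — the Feller/e-property and the accessibility \eqref{eq:56} — for the singular-degenerate dynamics, which is carried out in the lemmas that follow.
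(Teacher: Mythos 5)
The paper gives no proof of Proposition \ref{KPS} at all: it is quoted as a known result from \cite[Theorem 1]{Komorowski}, exactly as you identified, so your treatment --- a one-line appeal to the citation, with the real work deferred to verifying the Feller-, e-, and accessibility hypotheses in the subsequent lemmas --- is the same approach as the paper's, and your orientation sketch of the lower-bound argument (uniqueness via equicontinuity at the accessible point $z$, existence via Ces\`aro averages, with non-local-compactness of $H$ as the technical obstacle handled in \cite{Komorowski}) is consistent with the cited result.
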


Most of the following arguments involve an approximating process, which
will be introduced in the following lemmas.

\begin{lem}\label{recall-Yosida}
  Let $\phi^\eps$ denote the Yosida approximation of $\phi$, as introduced
  in \cite[Appendix D]{Neuss}. Then $\phi^\eps$ is Lipschitz continuous,
  monotonically increasing and piecewise affine.
\end{lem}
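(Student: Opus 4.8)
The plan is to separate the two qualitative properties, which are immediate from the general theory of maximal monotone operators, from the piecewise affine structure, which is specific to the graph of $\phi$ and is most easily obtained from an explicit computation of the resolvent.

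First I would recall the general construction. Writing $J_\eps := (\mathrm{Id} + \eps \phi)^{-1}$ for the resolvent, the Yosida approximation is $\phi^\eps = \eps^{-1}(\mathrm{Id} - J_\eps)$. For any maximal monotone operator, $J_\eps$ is a single-valued, everywhere-defined contraction, so that $\phi^\eps$ is automatically single-valued, monotone, and Lipschitz continuous with constant $1/\eps$. Since $\phi$ is a maximal monotone graph on $\R$, this already settles monotonicity and Lipschitz continuity, and only the piecewise affine claim remains to be checked by hand.

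For that claim I would compute $J_\eps$ explicitly by solving the scalar inclusion $y \in x + \eps \phi(x)$ branch by branch. The graph of $\phi$ is made up of the affine branches $\phi(x) = \{x\}$ for $\abs{x} > 1$ and $\phi(x) = \{0\}$ for $\abs{x} < 1$, joined by the vertical segments over $x = \pm 1$. Solving on each branch partitions $\R$ into five intervals, with breakpoints at $\pm 1$ and $\pm(1 + \eps)$: one finds $J_\eps(y) = y/(1+\eps)$ for $\abs{y} > 1 + \eps$, then $J_\eps(y) = \pm 1$ on the two intervals adjacent to the thresholds $\pm 1$ (where the argument is pinned to a vertical segment), and $J_\eps(y) = y$ for $\abs{y} < 1$. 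Inserting this into $\phi^\eps = \eps^{-1}(\mathrm{Id} - J_\eps)$ exhibits $\phi^\eps$ as the piecewise affine map that vanishes on $[-1,1]$, has slope $1/\eps$ on each of the two transition intervals of length $\eps$, and has slope $1/(1+\eps)$ on the two outer branches.

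I do not expect a genuine obstacle here: the only care needed is the bookkeeping of the five cases and the check that the one-sided values match at the four breakpoints, so that the resulting formula is a well-defined continuous piecewise affine function. All three asserted properties then follow by inspection of this formula — monotonicity because every slope is nonnegative, Lipschitz continuity with largest slope $1/\eps$, and piecewise affineness by construction.
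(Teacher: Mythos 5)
Your proposal is correct and is essentially the paper's own argument: the paper's one-line proof defers to the general theory cited from \cite{Neuss} for single-valuedness, monotonicity and Lipschitz continuity, while its Appendix \ref{Specific-Yosida} carries out exactly your branch-by-branch resolvent computation, arriving at the same piecewise formula \eqref{eq:62} with breakpoints at $\pm 1$ and $\pm(1+\eps)$ and slopes $0$, $1/\eps$ and $1/(1+\eps)$. Nothing is missing from your account.
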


\begin{proof}
  This is clear by the results in \cite[Appendix D]{Neuss}.
\end{proof}

\begin{lem}\label{recall-approx}
  Let $T>0$ and $x\in L^2$, and consider
  the SPDE
  \begin{align}\label{eq:34}
    \begin{split}
      \d X_t^{x,\eps} &= \eps \Delta X_t^{x,\eps}\d t + \Delta
      \phi^\eps(X_t^{x,\eps})\d t + B\d W_t,\\
      X_0 &= x.
    \end{split}
  \end{align}
  Then, identifying $x$ with an almost surely constant random variable
  $x \in L^2(\Om, L^2)$, \eqref{eq:34} allows for a unique variational
  solution $(X_t^{x,\eps})_{t\in[0,T]}$ in the sense of \cite[Definition
  4.2.1]{Roeckner} with respect to the Gelfand triple
  $\Hsob \hookrightarrow L^2 \hookrightarrow\Hd$. $X^{x,\eps}$ satisfies
  the regularity estimate
  \begin{equation}\label{eq:38}
    \E \sup_{t\in[0,T]} \norm{X^\xeps_t}_2^2 + \eps \E  \int_0^T
    \norm{X^\xeps_r}^2_{H^1_0}\d r \leq C(T)(\E\norm{x}_2^2 + 1)
  \end{equation}
  with a constant $C(T)>0$ independent of $\eps$. Furthermore, for
  $(x_n)_{n\in\N}\subset L^2$, $x_n \to x$ in $\Hd$ for $n\to\infty$, we have
  \begin{equation}\label{eq:35}
    \lim_{n\to\infty} \lim_{\eps\to 0} X^{x_n,\eps} = X^x,
  \end{equation}
  where $X^x$ is the SVI solution to \eqref{eq:1} and the limits are taken
  in $L^2\left(\Om, C([0,T], \Hd)\right)$. More precisely, the $\eps$-limit is uniform
  on bounded sets of $L^2$ by the estimate
  \begin{equation}\label{eq:47}
    \E \sup_{t\in[0,T]} \norm{X^{y,\eps} - X^{y}}^2_\Hd \leq \eps\, C(T)(\norm{y}_2^2+1)
  \end{equation}
  for $y\in L^2$, and for the $n$-limit we have
  \begin{equation}\label{eq:61}
    \E\sup_{t\in[0,T]}\norm{X^{x_n} - X^x}^2_\Hd \leq C(T)\norm{x - x_n}_\Hd^2.
  \end{equation}
  Finally, for $x,y\in\Hd$ we have
  \begin{equation}\label{eq:37}
    \sup_{t\in[0,T]}\E\norm{X_t^x - X_t^y}_\Hd^2 \leq C \norm{x - y}_\Hd^2.
  \end{equation}
\end{lem}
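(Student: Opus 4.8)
The plan is to treat the statement in four blocks: (i) existence, uniqueness and the energy bound \eqref{eq:38} for the regularised equation; (ii) the quantitative vanishing-viscosity estimate \eqref{eq:47}; (iii) the stability estimates \eqref{eq:61} and \eqref{eq:37} in the initial datum; and (iv) the identification \eqref{eq:35} of the double limit with the SVI solution $X^x$.

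For (i), I would place \eqref{eq:34} in the variational framework of \cite[Definition 4.2.1]{Roeckner} on the triple $\Hsob \hookrightarrow L^2 \hookrightarrow \Hd$ with drift $A_\eps(u) \ce \eps\Delta u + \Delta\phi^\eps(u)$. By Lemma \ref{recall-Yosida}, $\phi^\eps$ is Lipschitz, monotone and piecewise affine, so $A_\eps$ maps $\Hsob$ into $\Hd$ with linear growth and is hemicontinuous, and the monotone structure of $\phi^\eps$ makes its contribution to the energy pairing nonpositive; together with the coercive regularising term this yields, for $u\in\Hsob$,
\begin{displaymath}
  \dup{\Hd}{A_\eps(u)}{u}{\Hsob} = -\eps\norm{u}_{\Hsob}^2 - \int_\cO (\phi^\eps)'(u)\,\abs{\nabla u}^2\,\d x \leq -\eps\norm{u}_{\Hsob}^2 .
\end{displaymath}
Existence and uniqueness of the variational solution then follow as in \cite{Roeckner} (the relevant monotonicity being read in $\Hd$, \cf\cite{Neuss}). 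For \eqref{eq:38} I would apply the Itô formula for $\norm{X^\xeps_t}_2^2$; the displayed pairing controls the drift, so after taking expectations, estimating the stochastic integral by the Burkholder--Davis--Gundy and Young inequalities, and invoking Gronwall's lemma, one arrives at \eqref{eq:38}. The point is that the only $\eps$-dependent contribution is the favourable term $2\eps\,\E\int_0^T\norm{X^\xeps_r}^2_{\Hsob}\,\d r$, which is kept on the left-hand side, so that $C(T)$ is genuinely $\eps$-independent.

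For (ii), I would compare $X^{y,\eps}$ with the SVI solution $X^y$ of \eqref{eq:1} by testing the stochastic variational inequality defining $X^y$ against the process $X^{y,\eps}$ and using the variational equation for the latter to evaluate the resulting terms. Since the noise is additive and identical for both processes, the martingale parts cancel; the convexity of $\vphi$ (with $\phi^\eps$ the subgradient of a Moreau--Yosida approximation of $\psi$) renders the nonlinear terms nonpositive up to a Yosida discrepancy of order $\eps$, while the regularising term contributes, in the $\Hd$-inner product,
\begin{displaymath}
  \eps\,\sp{\Delta X^{y,\eps}_r}{X^{y,\eps}_r - X^y_r}_{\Hd} = -\eps\,\sp{X^{y,\eps}_r}{X^{y,\eps}_r - X^y_r}_{2} \leq \tfrac{\eps}{4}\,\norm{X^y_r}_2^2 .
\end{displaymath}
Integrating in time, taking the supremum and the expectation, and bounding $\E\int_0^T\norm{X^y_r}_2^2\,\d r$ by the $L^2$-regularity of the SVI solution yields \eqref{eq:47}, uniformly on $L^2$-balls, and in particular identifies $\lim_{\eps\to0}X^{y,\eps}=X^y$.

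Finally, for (iii) the estimates \eqref{eq:61} and \eqref{eq:37} follow from the $\Hd$-contractivity of SVI solutions under additive noise: the difference $X^x-X^y$ is driven by the same $BW$, so the noise cancels and the monotonicity of $\partial\vphi$ forces $t\mapsto\norm{X^x_t-X^y_t}_\Hd$ to be nonincreasing, giving $\norm{X^x_t-X^y_t}_\Hd\le\norm{x-y}_\Hd$ pathwise; \eqref{eq:61} and \eqref{eq:37} are then the $\E\sup$- and $\sup\E$-forms of this contraction. Combining the $\eps$-limit from (ii) with the $\Hd$-continuity \eqref{eq:61} in the initial datum gives \eqref{eq:35}. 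I expect the genuine obstacle to be step (ii): the variational solution $X^{y,\eps}$ and the SVI solution $X^y$ belong to different solution classes, so the cancellation of the monotone terms cannot be obtained by naively subtracting two equations and must be justified at the level of the stochastic variational inequality, carefully tracking both the vanishing Yosida error and the $\eps$-weighted term above.
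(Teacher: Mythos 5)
The paper does not actually prove Lemma \ref{recall-approx} --- its ``proof'' is a citation to \cite[sections 5.1--5.3]{Neuss}, with existence going back to \cite[Theorem 4.2.4]{Roeckner} and the quantitative estimates to \cite[equation (5.6)]{Neuss} --- so any self-contained argument is necessarily a different route. Yours, however, has a genuine gap in step (i). Your displayed computation establishes only the one-point dissipativity $\dup{\Hd}{A_\eps(u)}{u}{\Hsob}\leq -\eps\norm{u}_{\Hsob}^2$, whereas \cite[Theorem 4.2.4]{Roeckner} requires the two-point weak monotonicity
\begin{displaymath}
  2\,\dup{\Hd}{A_\eps(u)-A_\eps(v)}{u-v}{\Hsob}\;\leq\; C\,\norm{u-v}_{2}^{2}
  \qquad\text{for all }u,v\in\Hsob,
\end{displaymath}
verified \emph{in the same Gelfand triple} as the coercivity. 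In this $L^2$-pivot pairing the condition amounts to a lower bound on $\int_\cO\nabla\bigl(\phi^\eps(u)-\phi^\eps(v)\bigr)\cdot\nabla(u-v)\,\d x$, and this does \emph{not} follow from the scalar monotonicity of $\phi^\eps$: the integrand $\bigl((\phi^\eps)'(u)\nabla u-(\phi^\eps)'(v)\nabla v\bigr)\cdot\nabla(u-v)$ is sign-indefinite wherever $u$ and $v$ lie in different linearity regions of the piecewise affine $\phi^\eps$, and since $\mathrm{Lip}(\phi^\eps)=1/\eps$ the defect cannot be absorbed by the good term $\eps\norm{\nabla(u-v)}_2^2$ either. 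Indeed, taking $u$ a slow triangle wave oscillating across the threshold $1$ and $v$ a synchronised steep sawtooth confined to the flat region just below $1$, one makes the integral negative of order $\delta^2$ while $\norm{u-v}_2^2=O(\delta^4)$ and $\eps\norm{\nabla(u-v)}_2^2=O(\eps\delta^2)$, so the inequality fails for small $\eps$ and any $C$. What scalar monotonicity does give is monotonicity in the $\Hd$-pivot duality, $\sp{-\Delta(\phi^\eps(u)-\phi^\eps(v))}{u-v}_{\Hd}=\int_\cO(\phi^\eps(u)-\phi^\eps(v))(u-v)\,\d x\geq 0$, i.e.\ in the porous-medium triple $L^2\subset\Hd\subset(L^2)^*$. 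Your parenthesis ``the relevant monotonicity being read in $\Hd$'' points at exactly this, but it cannot be spliced into an application of the existence theorem whose coercivity you compute in the other triple: all hypotheses must hold in one triple, and in the triple where monotonicity holds the $V$-norm is $\norm{\cdot}_2$, so the theorem yields no $\Hsob$-regularity. This is precisely why \eqref{eq:38} is a \emph{separate} regularity estimate in \cite{Neuss}: existence and uniqueness come from \cite[Theorem 4.2.4]{Roeckner} in the porous-medium triple, and the $\Hsob$-bound must then be proved by an additional (Galerkin-type) argument before your It\^o computation for $\norm{X^\xeps_t}_2^2$ --- which presupposes $X^\xeps\in\Hsob$ $\P\otimes\d t$-a.e.\ --- is admissible.

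A second, structural problem is step (iii): you obtain the pathwise contraction $\norm{X^x_t-X^y_t}_\Hd\leq\norm{x-y}_\Hd$ for SVI solutions by subtracting the two equations and invoking monotonicity of $\partial\vphi$, but SVI solutions do not satisfy a pointwise equation, so this is exactly the ``naive subtraction'' you yourself rule out at the end of (ii). In the paper this contraction is Lemma \ref{Contractivity-prop}, proved \emph{after} Lemma \ref{recall-approx}: contraction is first shown for the approximants $X^{x,\eps}$ (It\^o in $\Hd$, where the monotonicity of $\phi^\eps$ does apply, giving \eqref{eq:66}) and then transferred to the limit using \eqref{eq:47}; the logical dependence runs from the present lemma to the contraction, not the other way around. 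Within your plan this is repairable in the same way, but as written \eqref{eq:61}, \eqref{eq:37} and hence the identification \eqref{eq:35} rest on an unproven assertion. By contrast, your step (ii) --- testing the stochastic variational inequality with $X^{y,\eps}$ as test process --- is a legitimate alternative to the Cauchy-in-$\eps$ argument behind \cite[equation (5.6)]{Neuss}, provided the admissibility of $X^{y,\eps}$ as a test process and the Yosida discrepancy are tracked as you indicate.
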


\begin{proof}
  This becomes clear by \cite[sections 5.1 -- 5.3]{Neuss}, where the
  well-posedness for \eqref{eq:34} goes back to \cite[Theorem 4.2.4]{Roeckner}, always
  identifying $x$ and $y$ with almost surely constant random variables in
  the respective spaces. For the quantitative estimates, see especially
  \cite[equation (5.6)]{Neuss}. 
\end{proof}

\begin{Rem}\label{consistency}
  We note that if $0<T_1 < T_2<\infty$, $x\in L^2$, $X^\xeps$ is a solution to \eqref{eq:34}
  constructed on $[0,T_1]$ and $Y^\xeps$ is a solution to \eqref{eq:34}
  constructed on $[0,T_2]$, then $(Y^\xeps_t)_{t\in[0,T_1]}$ is also a
  solution to \eqref{eq:34}. By the uniqueness part of \cite[Theorem
  4.2.4]{Roeckner}, we have
  \begin{displaymath}
    X^\xeps_t = Y^\xeps_t \quad \text{for all }t\in [0,T_1].
  \end{displaymath}
  Consequently, $X^\xeps_t$ is consistently defined for all $t\geq 0$,
  $x\in\Hd$, and
  the same is true for $X^x_t$ by \eqref{eq:35}.
\end{Rem}

From \cite[section 4.3]{Liu-R}, we recall the following disintegration result.
\begin{lem}\label{Markov-lem-approx}
  The solution to \eqref{eq:34} is a time-homogeneous Markov process, such
  that we have
  \begin{displaymath}
    \E f(X_{t+s}^{x,\eps}) = \E_{\ome} \E_{\omz}
    f(X_t^{X_s^{x,\eps}(\ome), \eps}(\omz))
  \end{displaymath}
  for any bounded, $\cB(L^2)$-measurable $f: L^2\to\R$ and $t,s >0$.
\end{lem}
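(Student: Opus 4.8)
The plan is to deduce the statement from the general Markov property for \emph{autonomous} variational solutions, as developed in \cite[section 4.3]{Liu-R}. The two structural facts that this machinery requires are already at our disposal: first, the coefficients of \eqref{eq:34}, namely the drift $y\mapsto \eps\Delta y + \Delta\phi^\eps(y)$ and the constant diffusion $B$, are time-independent; and second, \eqref{eq:34} is pathwise well-posed, which is exactly the content of Lemma \ref{recall-approx}. Since the Yosida approximation $\phi^\eps$ is Lipschitz and monotone (Lemma \ref{recall-Yosida}) and the added $\eps\Delta$ term supplies coercivity on the Gelfand triple $\Hsob\hookrightarrow L^2\hookrightarrow\Hd$, the standard hemicontinuity, monotonicity, coercivity and growth conditions hold, so the general theory applies.

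The core of the argument is a restart (flow) property. Fix $s>0$ and let $(\cF_t)_{t\geq0}$ be the normal filtration generated by $W$. Set $\tilde W_r\ce W_{s+r}-W_s$; this is again an $\mathrm{Id}$-cylindrical Wiener process, and it is independent of $\cF_s$. Restricting \eqref{eq:34} to $[s,s+t]$, the process $(X_{s+r}^{x,\eps})_{r\in[0,t]}$ solves the same equation driven by $\tilde W$ with the $\cF_s$-measurable, $L^2$-valued initial datum $X_s^{x,\eps}$. By the uniqueness in Lemma \ref{recall-approx}, extended in the usual way to initial data measurable with respect to the shifted initial $\sigma$-algebra $\tilde\cF_0\ce\cF_s$, this process coincides with $X_t^{X_s^{x,\eps},\eps}$ built from the fresh noise $\tilde W$. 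This identity is simultaneously the time-homogeneity and the Markov property: writing $\Phi_t(y)\ce\E f(X_t^{y,\eps})$ for bounded $\cB(L^2)$-measurable $f$, we obtain
\[
\bE{f(X_{t+s}^{x,\eps})}{\cF_s} = \Phi_t\!\left(X_s^{x,\eps}\right)\quad\text{almost surely.}
\]

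It remains to turn this conditional identity into the displayed iterated expectation. Taking expectations gives $\E f(X_{t+s}^{x,\eps})=\E\,\Phi_t(X_s^{x,\eps})$, and the product structure $\Om=\Om_1\times\Om_2$ — with $\ome$ carrying the noise up to time $s$, hence $X_s^{x,\eps}=X_s^{x,\eps}(\ome)$, and $\omz$ carrying the independent restarted noise $\tilde W$ entering $X_t^{\,\cdot\,,\eps}(\omz)$ — yields precisely
\[
\E f(X_{t+s}^{x,\eps})=\E_{\ome}\E_{\omz}\,f\!\left(X_t^{X_s^{x,\eps}(\ome),\eps}(\omz)\right).
\]
The point requiring care, and the main obstacle, is the well-posedness and measurability of the restarted solution from the random initial value $X_s^{x,\eps}$: one must verify that $y\mapsto\Phi_t(y)$ is $\cB(L^2)$-measurable and that uniqueness genuinely propagates to $\cF_s$-measurable initial data, so that the Fubini-type splitting over $\ome$ and $\omz$ is justified. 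Measurability follows from the continuous dependence on initial data (the $L^2$-analogue of \eqref{eq:37} established in \cite{Neuss}, which makes $\Phi_t$ in fact continuous), while the restart identity is the standard disintegration for monotone variational SPDEs; we therefore appeal to \cite[section 4.3]{Liu-R} for the technical execution.
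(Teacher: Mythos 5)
Your proposal is correct and takes essentially the same route as the paper, which offers no proof of its own but simply recalls this disintegration result from \cite[section 4.3]{Liu-R} --- precisely the reference you defer to for the technical execution of the restart/uniqueness argument. The only imprecision is your parenthetical claim that continuous dependence on the initial datum makes $\Phi_t(y)=\E f(X_t^{y,\eps})$ continuous: for a merely bounded $\cB(L^2)$-measurable $f$ this is false (continuity of $\Phi_t$ holds for continuous $f$, i.e.\ the Feller property), and the measurability needed for the disintegration over $\ome$ and $\omz$ is obtained by the usual monotone class extension from continuous $f$, which the cited reference carries out.
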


We need that solutions to \eqref{eq:1} are almost surely contractive,
which will be important in the subsequent analysis.
\begin{lem}\label{Contractivity-prop}
  Let $x,y\in\Hd$ and let $(X_t^x)_{t\geq 0}$ and $(X_t^y)_{t\geq 0}$ be
  the SVI solutions to \eqref{eq:1} with initial value $x$ and $y$,
  respectively. Then for all $T>0$ we have
  \begin{equation}\label{eq:67}
    \P\left(\norm{X_T^x - X_T^y}_\Hd \leq \norm{x-y}_\Hd\right) = 1.
  \end{equation}
\end{lem}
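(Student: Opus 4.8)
The plan is to prove the contraction first for the regularised variational approximations $X^{x,\eps}$ solving \eqref{eq:34}, where a pathwise chain-rule argument is available, and then transfer it to the SVI solutions through the convergence estimates of Lemma \ref{recall-approx}. The structural fact that makes everything work is that the noise in \eqref{eq:1} and \eqref{eq:34} is additive and identical for the two initial data: it cancels in the difference of the two solutions, leaving a purely deterministic (pathwise) evolution to which no Itô correction or martingale term contributes.

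Concretely, I fix first $x,y\in L^2$ and set $Z_t^\eps := X_t^{x,\eps} - X_t^{y,\eps}$. Subtracting the two copies of \eqref{eq:34} driven by the same $W$ removes the stochastic integral, so that, pathwise,
\begin{displaymath}
  \frac{\d}{\d t} Z_t^\eps = \eps\Delta Z_t^\eps + \Delta\left(\phi^\eps(X_t^{x,\eps}) - \phi^\eps(X_t^{y,\eps})\right)
\end{displaymath}
holds in $\Hd$, with $Z^\eps\in L^2([0,T],\Hsob)$ by the regularity in \eqref{eq:38}. Since $\phi^\eps$ is Lipschitz with $\phi^\eps(0)=0$ (Lemma \ref{recall-Yosida}), both $\phi^\eps(X^{x,\eps})$ and $\phi^\eps(X^{y,\eps})$ lie in $\Hsob$, so the right-hand side belongs to $L^2([0,T],\Hd)$ and I may apply the chain rule for $\norm{\cdot}_\Hd^2$ along the Gelfand triple $L^2\hookrightarrow\Hd\hookrightarrow H^{-2}$. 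Using the elementary identity $\sp{\Delta g}{h}_\Hd = -\sp{g}{h}_2$, valid for $g,h\in L^2$, this gives
\begin{displaymath}
  \frac12\frac{\d}{\d t}\norm{Z_t^\eps}_\Hd^2 = -\eps\norm{Z_t^\eps}_2^2 - \sp{\phi^\eps(X_t^{x,\eps}) - \phi^\eps(X_t^{y,\eps})}{X_t^{x,\eps} - X_t^{y,\eps}}_2.
\end{displaymath}
Both terms are nonpositive, the second because $\phi^\eps$ is monotonically increasing, so its difference against $X^{x,\eps}-X^{y,\eps}$ is pointwise nonnegative. Hence $t\mapsto\norm{Z_t^\eps}_\Hd$ is nonincreasing and $\norm{X_T^{x,\eps} - X_T^{y,\eps}}_\Hd \le \norm{x-y}_\Hd$ almost surely.

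It then remains to remove the regularisation and the restriction $x,y\in L^2$. For $x,y\in L^2$ I would let $\eps\to 0$ and invoke the uniform bound \eqref{eq:47}, by which $X^{x,\eps}\to X^x$ and $X^{y,\eps}\to X^y$ in $L^2(\Om,C([0,T],\Hd))$; passing to an almost surely convergent subsequence preserves the inequality and yields \eqref{eq:67} for $L^2$ data. For general $x,y\in\Hd$ I would choose $L^2$ sequences $x_n\to x$, $y_n\to y$ in $\Hd$, apply the $L^2$-case to $x_n,y_n$, and let $n\to\infty$: by \eqref{eq:61} the solutions converge in $L^2(\Om,C([0,T],\Hd))$, hence almost surely along a subsequence, while $\norm{x_n-y_n}_\Hd\to\norm{x-y}_\Hd$, so the inequality survives in the limit. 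This reproduces exactly the iterated limit \eqref{eq:35}.

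The chain rule and the monotonicity estimate are routine; the genuine care lies in two places. First, one must justify the pathwise chain rule for $\norm{\cdot}_\Hd^2$ in the variational setting and confirm that $\Delta\phi^\eps(X^{x,\eps})$ and $\Delta\phi^\eps(X^{y,\eps})$ are admissible $\Hd$-valued drifts, which is where the $\Hsob$-regularity of $\phi^\eps(X^{x,\eps})$ and the identity $\sp{\Delta g}{h}_\Hd = -\sp{g}{h}_2$ enter. Second, and this is the main obstacle, the contraction is an almost-sure statement for each approximation, whereas the SVI solution is only obtained as an $L^2(\Om,C([0,T],\Hd))$-limit; the delicate point is to pass the almost-sure inequality through both the $\eps$- and the $n$-limit by extracting almost surely convergent subsequences, so that it is inherited by $X^x$ and $X^y$ themselves.
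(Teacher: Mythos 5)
Your proposal is correct and follows the same skeleton as the paper's proof: first the pathwise contraction for the regularised solutions $X^{x,\eps}$ (noise cancels in the difference, chain rule for $\norm{\cdot}_\Hd^2$, both drift terms nonpositive by monotonicity of $\phi^\eps$, which is exactly the paper's Step 1, there phrased via It\^o's formula), then the $\eps$-limit for $L^2$ data via \eqref{eq:47}, then general $\Hd$ data via $L^2$ approximation. The one point where you genuinely diverge is the mechanism for transferring the almost-sure inequality through the limits: you extract almost surely convergent subsequences from the $L^2(\Om, C([0,T],\Hd))$ convergence and intersect countably many full-measure events, whereas the paper never passes to subsequences; it instead fixes thresholds $\frac1{n}$ and $\gamma>0$, converts the quantitative estimates \eqref{eq:47} and \eqref{eq:37} into exceedance probabilities via Markov's inequality, and concludes by a union bound that $\P\left(\norm{X_T^x-X_T^y}_\Hd - \norm{x-y}_\Hd > \frac1{n}\right)\leq\gamma$ for all $n,\gamma$. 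Both mechanisms are standard and equally rigorous here; your subsequence route is arguably the more elementary and shorter write-up, while the paper's route keeps everything quantitative (no subsequence extraction, explicit control of error probabilities), which is stylistically consistent with how the same estimates \eqref{eq:47} and \eqref{eq:37} are re-used quantitatively elsewhere (e.g.\ in Lemmas \ref{lem-energy-estimate} and \ref{conv-uinfty}). One small remark: your invocation of a Gelfand triple $L^2\hookrightarrow\Hd\hookrightarrow H^{-2}$ for the chain rule is a reasonable way to make the pathwise computation precise, but note the paper simply applies the variational It\^o formula of \cite[Theorem 4.2.5]{Roeckner} in $\Hd$; either way the key identity $\sp{\Delta g}{h}_\Hd=-\sp{g}{h}_2$ and the $\Hsob$-regularity of $\phi^\eps(X^{x,\eps})$ (Lipschitz composition, $\phi^\eps(0)=0$) are what make the drift admissible, and you correctly flag both.
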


\begin{proof}
  We first fix $T>0$ for which we want to show the statement.

  \textit{Step 1:} First we prove contractivity on the level of approximate
  solutions and $x,y\in L^2$. For this, let $(X_t^{x,\eps})_{t\in[0,T]}$
  and $(X_t^{y,\eps})_{t\in[0,T]}$ solve \eqref{eq:34} with the respective
  initial value. Let furthermore $Z_t := X_t^{x,\eps} - X_t^{y,\eps}$,
  which solves
  \begin{align*}
    \d Z_t &= \eps \Delta (X_t^{x,\eps} - X_t^{y,\eps})\,\d t + \left(\Delta
             \phi^\eps(X_t^{x,\eps}) - \Delta\phi^\eps(X_t^{y,\eps})\right)\d t,\\
    Z_0 &= x-y.
  \end{align*}
  Then, by Ito's formula (see \eg \cite[Theorem 4.2.5]{Roeckner}), and
  noting that $Z\in\Hsob$ $\P\otimes\d t$-almost surely by \eqref{eq:38},
  we obtain $\P$-almost surely
  \begin{align*}
    \norm{Z_t}^2_\Hd
    &= \norm{x-y}^2_\Hd + 2\eps\int_0^t \sp{\Delta Z_r}{Z_r}_\Hd\d r
                 + \int_0^t\sp{\Delta\phi^\eps(X_r^{x,\eps}) -
                     \Delta\phi^\eps(X_r^{y,\eps})}{Z_r}_\Hd\d r\\
    &= \norm{x-y}^2_\Hd - 2\eps\int_0^t\norm{Z_r}_2\d r
      - \int_0^t\sp{\phi^\eps(X_r^{x,\eps}) - \phi^\eps(X_r^{y,\eps})}{X_r^{x,\eps} - X_r^{y,\eps}}_{L^2}\d r.
  \end{align*}
  The last two terms (the latter because of the monotonicity of
  $\phi^\eps$) are negative, which yields
  \begin{equation}\label{eq:66}
    \P\left(\norm{X_T^{x,\eps} - X_T^{y,\eps}}_\Hd - \norm{x-y}_\Hd >
      0\right) = 0.
  \end{equation}

  \textit{Step 2:} We now turn to SVI solutions for $x,y\in L^2$. Note
  that it is enough to show for arbitrary $n\in\N, \gamma > 0$ that
  \begin{equation}\label{eq:10}
    \P\left(\norm{X_T^x - X_T^y}_\Hd - \norm{x-y}_\Hd > \frac1{n}\right) \leq \gamma.
  \end{equation}
  To obtain this, choose $\eps$ sufficiently small such that by \eqref{eq:47}
  \begin{displaymath}
    \max\left\{\E\norm{X_T^{x,\eps}-X_T^x}_\Hd ,
      \E\norm{X_T^{y,\eps}-X_T^y}_\Hd\right\} < \frac{\gamma}{4n},
  \end{displaymath}
  which yields by Markov's inequality that
  \begin{displaymath}
    \P\left(\norm{X_T^{x,\eps}-X_T^x}_\Hd \geq \frac1{2n}\right) \leq
    \frac{\gamma}{2}
  \end{displaymath}
  and the corresponding statement for $X_T^y$. Thus together with
  \eqref{eq:66} we have 
  \begin{align*}
    \P&\left(\norm{X_T^x - X_T^y}_\Hd - \norm{x-y}_\Hd >
      \frac1{n}\right)\\
 \leq &\;\P\left(\norm{X_T^x - X_T^{x,\eps}}_\Hd \geq \frac1{2n}\right)
      + \P\left(\norm{X_T^y - X_T^{y,\eps}}_\Hd \geq \frac1{2n}\right)\\
      &+ \P\left(\norm{X_T^{x,\eps} - X_T^{y,\eps}}_\Hd - \norm{x-y}_\Hd
        > 0\right)\\
    \leq &\; \gamma,
  \end{align*}
  which yields \eqref{eq:67} in the case $x,y\in L^2$.
 
  \textit{Step 3:} Finally consider $x,y\in\Hd$. By \eqref{eq:37} we know
  that for $x,y\in\Hd$
  \begin{displaymath}
    \E\norm{X_T^x - X_T^y}_\Hd \leq C \norm{x-y}_\Hd.
  \end{displaymath}
  In order to confirm \eqref{eq:10}, we choose $\tix, \tiy\in L^2$ in a way
  that ($\norm{\cdot} = \norm{\cdot}_\Hd$)
  \begin{displaymath}
    \max\left\{\norm{x-\tix}, \norm{y-\tiy}\right\} \leq
    \frac1{4n}\quad \text{and}\quad \max\left\{C\norm{x-\tix},
      C\norm{y-\tiy}\right\} \leq \frac{\gamma}{8n}.
  \end{displaymath}
  Using
  \begin{displaymath}
    \norm{x-y} = \norm{x-\tix + \tix - \tiy + \tiy - y}
    \geq \norm{\tix - \tiy} - \norm{x-\tix} - \norm{y - \tiy}
  \end{displaymath}
  and, again by Markov's inequality,
  \begin{displaymath}
    \max\left\{\Plr{\norm{X_T^x - X_T^{\tix}} \geq \frac1{4n}}, \Plr{\norm{X_T^y - X_T^{\tiy}} \geq \frac1{4n}}\right\}  \leq \frac{\gamma}{2},
  \end{displaymath}
  we can compute
  \begin{align*}
    \P&\left(\norm{X_T^x - X_T^y}_\Hd - \norm{x-y}_\Hd > \frac1{n}
    \right)\\
    \leq &\; \Plr{\norm{X_T^x - X_T^{\tix}} \geq \frac1{4n}}
    + \Plr{\norm{X_T^{\tix} - X_T^{\tiy}} -
           \norm{\tix - \tiy} > 0}
           + \Plr{\norm{X_T^{\tiy} - X_T^y} \geq \frac1{4n}}\\
    &\; + \Plr{\norm{x -\tix} \geq \frac1{4n}} + \Plr{\norm{y-\tiy} \geq
      \frac1{4n}}\\
    \leq &\; \gamma,
  \end{align*}
  which finishes the proof.
\end{proof}

\begin{lem}\label{Markov-prop}
  The solution to \eqref{eq:1} gives rise to a semigroup of Markov transition kernels by
  \begin{displaymath}
    P_t (x, A) = \E \Ind{A}(X_t^x)\quad \text{for }x\in\Hd \text{ and }A\in
    \cB(\Hd).
  \end{displaymath}
  The induced semigroup $(P_t)_{t\geq 0}$ on $\cB_b(\Hd)$, given by
  \begin{displaymath}
    P_tf (x) = \int_\Hd f(y) P_t(x, \d y).
  \end{displaymath}
  has the Feller- and the e-property. For all $x\in\Hd$ and $f\in C_b(\Hd)$,
  \begin{equation}\label{eq:57}
    [0,\infty) \ni t\mapsto P_tf(x)
  \end{equation}
  is continuous at $t=0$.
\end{lem}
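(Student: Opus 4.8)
The plan is to treat the three assertions separately: the Feller property, the e-property and the continuity at $t=0$ follow almost directly from the stability estimates of Lemma \ref{recall-approx} together with the contraction principle, and the real work is reserved for the Markov (Chapman--Kolmogorov) property, which has to be transported from the approximations.

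First I would settle the soft parts. That $P_t(x,\cdot)$ is a probability measure is immediate, and measurability of $x\mapsto P_t(x,A)$ follows once we know $P_tf$ is continuous for $f\in C_b(\Hd)$: by \eqref{eq:37} the map $x\mapsto X^x_t$ is Lipschitz from $\Hd$ into $L^2(\Om,\Hd)$, so $x_k\to x$ in $\Hd$ forces $X^{x_k}_t\to X^x_t$ in probability, and bounded convergence yields $P_tf(x_k)\to P_tf(x)$; this is the Feller property, and a monotone class argument then gives measurability of $P_t(\cdot,A)$ for all $A\in\cB(\Hd)$. For the e-property I would use the contraction principle of Lemma \ref{Contractivity-prop}: since $\norm{X^x_t-X^y_t}_\Hd\le\norm{x-y}_\Hd$ holds almost surely for \emph{every} $t\ge 0$, any bounded Lipschitz $f$ with constant $L$ satisfies
\[
  \abs{P_tf(x)-P_tf(y)}\le L\,\E\norm{X^x_t-X^y_t}_\Hd\le L\norm{x-y}_\Hd
\]
uniformly in $t$, which is exactly the equicontinuity of $(P_tf)_{t\ge 0}$ required in Definition \ref{e-prop-Def}. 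Continuity at $t=0$ is equally direct: by \eqref{eq:35} the trajectory $t\mapsto X^x_t$ lies in $C([0,T],\Hd)$ almost surely with $X^x_0=x$, so $X^x_t\to x$ in $\Hd$ as $t\downarrow 0$ and $P_tf(x)=\E f(X^x_t)\to f(x)$ by bounded convergence.

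The substantial step is the semigroup identity $P_{t+s}f=P_s(P_tf)$. I would start from the approximate Markov property of Lemma \ref{Markov-lem-approx}, which for $x\in L^2$ and $P^\eps_tf(y):=\E f(X^{y,\eps}_t)$ reads
\[
  \E f(X^{x,\eps}_{t+s})=\E_{\ome}\big[(P^\eps_tf)(X^{x,\eps}_s(\ome))\big],
\]
and pass to the limit $\eps\to 0$ for bounded Lipschitz $f$. The left-hand side tends to $\E f(X^x_{t+s})$ by \eqref{eq:47} and bounded convergence. On the right I would split the integrand as $(P^\eps_tf-P_tf)(X^{x,\eps}_s)+\big[(P_tf)(X^{x,\eps}_s)-(P_tf)(X^x_s)\big]$. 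The second bracket vanishes in the limit because $P_tf$ is continuous and bounded and $X^{x,\eps}_s\to X^x_s$ in probability by \eqref{eq:47}. For the first bracket the Lipschitz bound together with \eqref{eq:47} gives $\abs{P^\eps_tf(y)-P_tf(y)}\le L\sqrt{\eps\,C(t)}\,(\norm{y}_2^2+1)^{1/2}$ for $y\in L^2$; evaluating at the random state $y=X^{x,\eps}_s(\ome)$ and controlling the second moment $\E_{\ome}\norm{X^{x,\eps}_s}_2^2$ by \eqref{eq:38} shows this contribution is of order $\sqrt\eps$. This yields $\E f(X^x_{t+s})=\E_{\ome}\big[(P_tf)(X^x_s(\ome))\big]=P_s(P_tf)(x)$ for $x\in L^2$. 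I would then extend to $x\in\Hd$ by taking $x_n\in L^2$ with $x_n\to x$ in $\Hd$ and using \eqref{eq:61} to pass to the limit on both sides (the right-hand side through the continuous bounded function $P_tf$), and finally a monotone class argument upgrades the relation from bounded Lipschitz to bounded Borel $f$, so that $(P_t)_{t\ge 0}$ is indeed a semigroup of Markov transition kernels.

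The main obstacle is precisely this last limit passage, and within it the term in which the approximate semigroup $P^\eps_t$ is evaluated at the random intermediate state $X^{x,\eps}_s(\ome)$: this state is only known to lie in $L^2$ with a finite second moment rather than in a fixed $L^2$-ball, so the uniform-on-balls rate \eqref{eq:47} cannot be applied pointwise and must be coupled with the moment bound \eqref{eq:38}. Restricting to Lipschitz $f$ before the monotone class extension is what makes this coupling clean; once it is in place, the remaining arguments are routine.
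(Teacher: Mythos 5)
Your proposal is correct and takes essentially the same route as the paper: the contraction principle of Lemma \ref{Contractivity-prop} yields the e-property and (together with the stability estimates) the Feller property and continuity at $t=0$, a monotone class argument reduces both measurability and the Chapman--Kolmogorov identity to bounded Lipschitz functions, and the semigroup property is transported from the approximate Markov property of Lemma \ref{Markov-lem-approx} by passing to the limit. Your explicit handling of the $\eps$-limit --- splitting off $(P^\eps_t f - P_t f)(X^{x,\eps}_s)$ and coupling the rate \eqref{eq:47} with the moment bound \eqref{eq:38}, then extending from $L^2$ to $\Hd$ via \eqref{eq:61} --- is exactly the argument the paper compresses into ``passing to the limit via Lemmas \ref{recall-approx} and \ref{Contractivity-prop}''.
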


\begin{Rem}\label{existence-canonical-process}
  The semigroup $(P_t)_{t\geq 0}$ consisting of Markov transition kernels
  together with the obvious fact
  \begin{displaymath}
    P_0(x, A) = \Ind{A}(x)
  \end{displaymath}
  implies that there is a ``canonical'' Markov process with transition
  probabilities $(P_t)_{t\geq 0}$ (see \eg \cite[section
  2.2]{DPr-Zab-Ergodicity}).
\end{Rem}

\begin{Rem}\label{stoch-continuity}
  Note that the last statement in Lemma \ref{Markov-prop} implies the
  stochastic continuity of $(P_t)_{t\geq 0}$ by \cite[Proposition
  2.1.1]{DPr-Zab-Ergodicity}. By \cite[Theorem 2.2.2]{DPr-Zab-Ergodicity},
  the corresponding canonical process is then also stochastically continuous.
\end{Rem}

\begin{proof}[Proof of Lemma \ref{Markov-prop}:]
  The continuity of \eqref{eq:57} follows from the construction as an almost surely
  continuous process, and the Feller property from the contractivity in Lemma
  \ref{Contractivity-prop}. In both cases, we use that almost sure
  convergence implies convergence in probability, which in turn yields
  convergence in distribution by the Slutsky theorem (see \eg \cite[Theorem
  13.18]{Klenke}).

  To prove the e-property for $(P_t)_{t\geq 0}$, it is sufficient to show
  that for $f: \Hd\to\R$ bounded and Lipschitz continuous,
  $P_tf$ $(t\geq 0)$ is Lipschitz continuous with Lipschitz constant
  independent of $t$ and equal to the Lipschitz constant
  $[f]_{\mathrm{Lip}}$ of $f$. Using Lemma \ref{Contractivity-prop}, we can
  compute for $x,y\in\Hd$
  \begin{align*}
    \abs{P_tf(x) - P_tf(y)}
    &= \abs{\E\left[ f(X_t^x) - f(X_t^y)\right]}\\
    &\leq \E \abs{f(X_t^x) - f(X_t^y)}\\
    &\leq \E \left[ [f]_{\mathrm{Lip}} \norm{X_t^x - X_t^y}_\Hd\right]\\
    &\leq [f]_{\mathrm{Lip}} \norm{x-y}_\Hd,
  \end{align*}
  as required. 

  We turn to the kernel properties of $P_t$: For $x\in\Hd$, $t\geq 0$, $P_t(x, \cdot)$
  is the pushforward measure of $X^x_t$ and thereby a probability
  measure. Moreover, let $A\in \cB(\Hd)$. Note that the class of all functions $f\in
  \cB_b(\Hd)$, for which
  \begin{equation}\label{eq:54}
    \Hd \ni x\mapsto P_tf(x)
  \end{equation}
  is measurable, is monotone in the sense of \cite[Theorem 0.2.2, i) and ii)]{Revuz-Yor}. As the family of bounded Lipschitz functions generates
  the Borel $\sigma$-algebra and is stable under pointwise multiplication,
  \begin{displaymath}
    \Hd \ni x\mapsto P_t\Ind{A} (x)
  \end{displaymath}
  is proven to be measurable by the monotone class theorem (see \eg
  \cite[Theorem 0.2.2]{Revuz-Yor}), as soon as we show measurability of \eqref{eq:54} for
  bounded and Lipschitz continuous $f$. The latter, however, becomes clear
  by taking into account that $P_t f$ is Lipschitz continuous if $f$ is
  Lipschitz continuous (see the proof of the e-property above).

  To establish the semigroup property, we first note that the class of
  functions $f\in\cB_b(\Hd)$, for which the semigroup property
  \begin{equation}\label{eq:55}
    P_{t+s}f(x) = P_s(P_tf)(x)\quad \text{for all }t,s\geq 0,\, x\in\Hd
  \end{equation}
  holds, is also monotone, so that it is enough to prove the semigroup
  property for $f: \Hd\to\R$ being bounded and Lipschitz continuous. For
  such $f$, the claim follows by using the semigroup property for the
  approximating process $(X_t^{x_n, \eps})_{t\geq 0}$ with $\eps >0$,
  $n\in\N$, $(x_n)_{n\in\N} L^2$, $x_n\to x$ for $n\to\infty$ as stated in
  Lemma \ref{Markov-lem-approx}, and passing to the limit via Lemmas
  \ref{recall-approx} and \ref{Contractivity-prop}.
\end{proof}

The following lemma is an energy estimate for the $L^\infty$ norm.
\begin{lem} \label{lem-energy-estimate}
  Let $x\in \Hd$, $\delta, \rho > 0$ and for $R>0$
  \begin{displaymath}
    C_\delta(R) := \left\{ u\in\Hd: \exists v\in B_R^\infty(0) \text{ such
        that }\norm{u-v}_\Hd < \delta\right\},
  \end{displaymath}
  where $B_R^\infty(0) := \{ v\in L^\infty: \norm{v}_\infty<R\}$. Then there exists
  $R = R(\rho)>3$ such that for all $T>1$ we have
  \begin{equation}
    \frac1{T}\int_0^T \P(X^x_r \in C_\delta(R))\,\d r \geq 1-\rho.
  \end{equation}
  for solutions $X^x$ to \eqref{eq:1}.
\end{lem}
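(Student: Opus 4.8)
The plan is to prove the bound first for the smooth approximations $X^{x,\eps}$ of Lemma \ref{recall-approx} with $L^2$-data and then to pass to the limit, and to reduce the event $\{X_r\in C_\delta(R)\}$ to a bound on $\norm{\cdot}_2$ by a spectral truncation. First I would derive a \emph{uniform-in-time} bound on the time-averaged $L^2$-mass. Applying Itô's formula to $\norm{X_t^{x,\eps}}_\Hd^2$, which is admissible since $X^{x,\eps}\in L^2([0,T];\Hsob)$ a.s. by \eqref{eq:38}, and using $\sp{\Delta v}{v}_\Hd=-\norm v_2^2$, gives
\begin{displaymath}
  \d\norm{X_t^{x,\eps}}_\Hd^2 = \left(-2\eps\norm{X_t^{x,\eps}}_2^2 - 2\sp{\phi^\eps(X_t^{x,\eps})}{X_t^{x,\eps}}_{L^2} + \norm{B}_{L_2(U,\Hd)}^2\right)\d t + 2\sp{X_t^{x,\eps}}{B\,\d W_t}_\Hd.
\end{displaymath}
Here the linear growth of $\phi$ is decisive: since $\phi^\eps$ is the Yosida approximation (Lemma \ref{recall-Yosida}) of $\phi(x)=x\Ind{\{\abs x>1\}}$, a direct computation gives the uniform coercivity $\sp{\phi^\eps(u)}{u}_{L^2}\ge\tfrac12\norm u_2^2-C$ for $\eps\in(0,1]$ with $C$ independent of $\eps$. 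Taking expectations and integrating then yields
\begin{displaymath}
  \frac1T\E\int_0^T\norm{X_r^{x,\eps}}_2^2\,\d r \le \frac{\norm x_\Hd^2}{T} + C',
\end{displaymath}
with $C'$ independent of $\eps$ and of $T$. Crucially, only $\norm x_\Hd$ enters, which is what will permit the later approximation of a general $x\in\Hd$ by $L^2$-data.

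The second step is purely deterministic: an $L^2$-bound forces $u$ into $C_\delta(R)$. Let $P_N$ be the $L^2$-orthogonal projection onto the span of the first $N$ Dirichlet eigenfunctions of $-\Delta$ on $\cO=(-1,1)$, with eigenvalues $0<\lambda_1\le\lambda_2\le\cdots$. If $\norm u_2\le M$, then $\norm{(I-P_N)u}_\Hd\le\lambda_{N+1}^{-1/2}\norm u_2\le\lambda_{N+1}^{-1/2}M$, while $P_Nu$ lies in a fixed finite-dimensional space of bounded eigenfunctions, so $\norm{P_Nu}_\infty\le C_N\norm u_2\le C_NM$. Choosing $N=N(M,\delta)$ with $\lambda_{N+1}^{-1/2}M<\delta/2$ and $R:=C_NM+1$, the decomposition $u=P_Nu+(I-P_N)u$ certifies $u\in C_{\delta/2}(R)$. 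Hence $\{u\in L^2:\norm u_2\le M\}\subseteq C_{\delta/2}(R)$ for $R=R(M,\delta)$, which can be taken larger than $3$.

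Combining the two steps, Markov's inequality gives $\frac1T\int_0^T\P(\norm{X_r^{x,\eps}}_2>M)\,\d r\le(\norm x_\Hd^2/T+C')/M^2$, so for $T>1$ and $M=M(\rho)$ large this is at most $\rho/2$, whence $\frac1T\int_0^T\P(X_r^{x,\eps}\in C_{\delta/2}(R))\,\d r\ge1-\rho/2$ for the corresponding $R=R(\rho,\delta)>3$, uniformly in $\eps\in(0,1]$. To remove the approximations I would fix $T>1$ and let $\eps\to0$ via \eqref{eq:47} and then, for general $x$, approximate $x_n\to x$ in $\Hd$ with $x_n\in L^2$ via \eqref{eq:61}; both errors are controlled in $L^2(\Om,C([0,T],\Hd))$, so by Markov's inequality the time-averaged probability of $\norm{X_r^{x,\eps}-X_r^x}_\Hd\ge\delta/2$ is as small as desired. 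Since an $\Hd$-perturbation of size below $\delta/2$ maps $C_{\delta/2}(R)$ into $C_\delta(R)$ by the triangle inequality, the bound transfers to $X^x$ and yields the claim.

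The step I expect to be most delicate is this last passage to the limit: the convergence $X^{x,\eps}\to X^x$ holds only in $\Hd$, the target set $C_\delta(R)$ is merely $\Hd$-open, and the constant $C(T)$ in \eqref{eq:47} degenerates as $T\to\infty$. This is precisely why one fixes $T$ before sending $\eps\to0$ and proves the inclusion with the safety margin $\delta/2$, so that the $\Hd$-approximation error is absorbed. One must also verify that $M$, and hence $R$, remain uniform along the sequence $x_n$, which is the case exactly because the energy bound depends on $\norm x_\Hd$ and not on $\norm x_2$.
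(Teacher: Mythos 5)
Your proposal is correct, but it reaches the conclusion by a genuinely different mechanism than the paper. The paper applies It\^o's formula to the $L^2$-norm $\norm{X_t^{\tix,\eps}}_2^2$, uses $(\phi^\eps)'\geq\frac12\Ind{\R\setminus[-1,1]}$ and the chain rule to extract the dissipation $\int_0^t\int_\cO\bigl(\nabla A(X_r^{\tix,\eps})\bigr)^2$ with $A(s)=\mathrm{sgn}(s)(\abs{s}-1)\Ind{\{\abs{s}>1\}}$, and then invokes the one-dimensional embedding $\Hsob\hookrightarrow L^\infty$ to get $\E\int_0^t(\norm{X_r^{\tix,\eps}}_\infty-1)_+^2\,\d r\leq C(\norm{\tix}_2^2+t)$; this places the approximants \emph{inside} $B_R^\infty(0)$ with high average probability, and the $\delta$-margin is spent on the $\eps$-limit (via \eqref{eq:47}) and on the initial-data approximation, the latter handled by the almost-sure contractivity of Lemma \ref{Contractivity-prop} rather than by \eqref{eq:61}. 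You instead apply It\^o's formula to the $\Hd$-norm and use only the uniform coercivity $\sp{\phi^\eps(u)}{u}_{L^2}\geq\frac12\norm{u}_2^2-C$ — i.e. the linear growth of $\phi$, not its degeneracy structure — to get a time-averaged $L^2$-bound depending only on $\norm{x}_\Hd$; the $L^\infty$-information then comes from the purely deterministic spectral-truncation fact that every $L^2$-ball lies in $C_{\delta/2}(R)$ for suitable $R$. Both routes are sound: your coercivity inequality holds pointwise ($\phi^\eps(s)s\geq\frac12 s^2-2$ for $\eps\leq1$, by \eqref{eq:62}), your truncation estimates $\norm{(I-P_N)u}_\Hd\leq\lambda_{N+1}^{-1/2}\norm{u}_2$ and $\norm{P_Nu}_\infty\leq C_N\norm{u}_2$ are correct, and your limit passage (fix $T$, keep the $\delta/2$ safety margin, note that $M$ depends only on $\norm{x_n}_\Hd$) mirrors the paper's. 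The trade-off: the paper's argument yields the stronger statement that the solution itself essentially lives in an $L^\infty$-ball — information tied to the degenerate dynamics, but bound to $d=1$ through $\Hsob\hookrightarrow L^\infty$ and to $L^2$ initial data through $\norm{\tix}_2$ — whereas your argument is softer and more robust: it works in any spatial dimension and for any nonlinearity with the same coercivity, and its dependence on $\norm{x}_\Hd$ alone makes uniformity along the approximating sequence automatic, at the price of only ever placing the solution $\Hd$-close to $L^\infty$-balls, which is exactly what the lemma asserts and what the downstream argument (Lemma \ref{conv-uinfty} combined with contractivity) actually uses.
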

\begin{proof}
  We first consider the approximating solutions from \eqref{eq:34} with
  initial value $\tilde{x}\in L^2$, for which we know by \eqref{eq:38} that
  they are in $\Hsob$, $\P \otimes \d t$-almost surely. We choose $\tix$ in
  a way that
  \begin{equation}\label{eq:64}
    \norm{x - \tilde{x}}_\Hd \leq \frac{\delta}{2}.
  \end{equation}
  Note also that $\phi^\eps$ is weakly
  differentiable for $\eps>0$ and
  \begin{equation}\label{eq:2}
    (\phi^\eps)' \geq \frac1{2}\Ind{\R\setminus[-1,1]}
  \end{equation}
  for $0<\eps<1$ by \eqref{eq:62}. Ito's formula (see \eg \cite[Theorem
  4.2.5]{Roeckner}) on the Gelfand triple
  $\Hsob \hookrightarrow L^2 \hookrightarrow \Hd$ then yields
  \begin{align*}
    \norm{X_t^{\tix,\eps}}_2^2
    =& \norm{\tix}_2^2 + \int_0^t 2 \dup{\Hsob}{X_r^{\tix,\eps}}{\Delta(\eps X_r^{\tix, \eps} + \phi^\eps(X_r^{\tix, \eps}))}{\Hd}\,\d  r\\
     &+ \int_0^t 2\sp{X_r^{\tix, \eps}}{B\,\d W_r}_{L^2} + \int_0^t
       2\norm{B}_{L_2(U,L^2)}^2\,\d r.
  \end{align*}
  Abbreviating the last two summands by $K$ and using the chain rule for Sobolev functions (see \eg \cite[Theorem
  2.1.11]{Ziemer}) and \eqref{eq:2}, we obtain
  \begin{align}\label{eq:3}
    \begin{split}
      \norm{X_t^{\tix, \eps}}_2^2 &= \norm{\tilde{x}}_2^2 -2\eps\int_0^t \norm{\nabla
        X_r^{\tix, \eps}}_2^2\d r- \int_0^t\int_\cO 2\sp{\nabla
        X_r^{\tix, \eps}}{\nabla\phi(X_r^{\tix, \eps})}\,\d x\,\d r
      + K\\
      &\leq \norm{\tilde{x}}_2^2 - 2\int_0^t\int_\cO \phi'(X_r^{\tix, \eps})(\nabla
      X_r^{\tix, \eps})^2\,\d x\,\d  r + K\\
      &\leq \norm{\tilde{x}}_2^2 - \int_0^t\int_\cO
      \Ind{\left\{\abs{X_r^{\tix, \eps}}>1\right\}}(\nabla X_r^{\tix, \eps})^2\,\d
      x\,\d  r + K\\
      &= \norm{\tilde{x}}_2^2 - \int_0^t\int_\cO
      \left(\Ind{\left\{\abs{X_r^{\tix, \eps}}>1\right\}}\nabla X_r^{\tix, \eps}\right)^2\,\d x\,\d r
      + K.
    \end{split}
  \end{align}
  Defining $A\in \mathrm{Lip}(\R)$ by
  \begin{displaymath}
    x\mapsto A(x) = \mathrm{sgn}(x)\left(\abs{x} - 1\right)
    \Ind{\{\abs{x}>1\}},
  \end{displaymath}
  we see that almost everywhere
  \begin{displaymath}
    A'(X_r^{\tix, \eps}) = \Ind{\left\{\abs{X_r^{\tix, \eps}}>1\right\}}.
  \end{displaymath}
  Thus, using the chain rule for Sobolev functions and
   the continuous embedding $\Hsob \hookrightarrow L^\infty$, we can continue \eqref{eq:3} by
  \begin{align}\label{eq:6}
    \begin{split}
      \norm{X_t^{\tix, \eps}}_2^2 &\leq \norm{\tilde{x}}_2^2 - \int_0^t\int_\cO
      \left(\nabla A(X_r^{\tix, \eps})\right)^2\,\d x\,\d r + K\\
      &\leq \norm{\tilde{x}}_2^2 - C\int_0^t\norm{A(X_r^{\tix, \eps})}_\infty^2\,\d r + K\\
      &= \norm{\tilde{x}}_2^2 - C\int_0^t\left(\norm{X_r^{\tix, \eps}}_\infty -
        1\right)_+^2\,\d r + K.
    \end{split}
  \end{align}

  For the remaining part
  \begin{displaymath}
    K = \int_0^t 2\sp{X_r^{\tix, \eps}}{B\,\d W_r}_{L^2} + \int_0^t
       2\norm{B}_{L_2(U,L^2)}^2\,\d r
  \end{displaymath}
  we notice that the first summand vanishes in expectation and that the
  second one can be estimated from above by $Ct$ by the assumptions on $B$.
  Thus, taking expectations in \eqref{eq:6} provides
  \begin{equation}\label{eq:4}
    \E \int_0^t(\norm{X_r^{\tix, \eps}}_\infty-1)_+^2\,\d r \leq C\left(\norm{{\tilde{x}}}_2^2 + t\right),
  \end{equation}
  where we emphasize that $C$ does not depend on $\eps$.
  By the Markov inequality, we then use \eqref{eq:4} to compute
  \begin{align*}
    \frac1{T}\,\int_0^T\P\left((\norm{X_r^{\tix, \eps}}_\infty-1)_+^2 > R\right)\,\d r
    &\leq \frac1{T} \int_0^T \frac{\E\left(\norm{X_r^{\tix, \eps}}_\infty-1\right)_+^2}{R}\,\d r\\
    &\leq \frac{C}{TR}\left(\norm{{\tilde{x}}}_2^2 + T\right),
  \end{align*}
  which for $T>1$ becomes smaller than $\frac{\rho}{2}$ by choosing $R$
  large enough, uniformly in $\eps$. For technical reasons, we impose $R>3$
  without loss of generality. For $T>1$ fixed, we now choose
  $\eps$ small enough such that
  \begin{equation}\label{eq:9}
    \E \sup_{t\in [0,T]} \norm{X^\tix_t - X_t^{\tix, \eps}}_\Hd \leq \frac{\rho\delta}{4}.
  \end{equation}
  By Markov's inequality, \eqref{eq:9} yields
  \begin{displaymath}
    \P \left(\sup_{t\in [0,T]} \norm{X^\tix_t - X_t^{\tix, \eps}}_\Hd\geq \frac{\delta}{2}\right) \leq \frac{\rho}{2}.
  \end{displaymath}
  By Lemma \ref{Contractivity-prop} and \eqref{eq:64} we have for $t>0$
  \begin{displaymath}
    \norm{X^x_t - X^\tix_t}_\Hd \leq \frac{\delta}{2}\quad \text{almost surely,}
  \end{displaymath}
  which we use to conclude for $R$ as chosen above
  \begin{align}\label{eq:19}
    \begin{split}
      \frac1{T} \int_0^T \P(X^x_r \in C_\delta(R))\,\d r
      \geq& \frac1{T}\int_0^T \P(X^\tix_r\in C_{\frac{\delta}{2}}(R)\,\d r\\
      =& 1 - \frac1{T}\int_0^T \P(X^\tix_r \notin C_{\frac{\delta}{2}}(R))\,\d r\\
      \geq& 1 - \frac1{T}\int_0^T \P\left(\norm{X^\tix_r - X_r^{\tix, \eps}}_\Hd
        \geq {\frac{\delta}{2}}
        \text{ or } \norm{X_r^{\tix, \eps}}_\infty\geq R\right)\,\d r\\
      \geq& 1 - \frac1{T}\int_0^T \P\left(\norm{X^\tix_r - X_r^{\tix, \eps}}_\Hd
        \geq {\frac{\delta}{2}}\right)\\
      &\qquad \qquad\quad+ \P\left(\norm{X_r^{\tix, \eps}}_\infty \geq \sqrt{R} + 1\right)\, \d r\\
      \geq& 1 - \frac{\rho}{2} - \frac1{T}\int_0^T \P\left(
        (\norm{X_r^{\tix, \eps}}_\infty-1)_+^2 \geq R\right)\,\d r\\
      \geq& 1 - \rho,
    \end{split}
  \end{align}
  as required.
\end{proof}

We continue with the analysis of the deterministic control process, for
which we cite a translated version of \cite[Théorème 3.11]{Brezis}. For the
definition of weak and strong solutions, see Definition \ref{Def-det-solns}.
\begin{prop}\label{det-conv}
  Let $H$ be a Hilbert space and $A: H\supseteq D(A) \to H$ a maximal
  monotone operator of the form $A= \partial \vphi$ for some $\vphi: H\to
  [0,\infty]$ convex, proper and lower-semicontinuous. Suppose that for all $C\in\R$ the set
  \begin{equation}\label{eq:12}
    M:=\{x\in H: \vphi(x) + \norm{x}^2 \leq C\}
  \end{equation}
  is strongly compact. Let $f \in L^1_{\mathrm{loc}}([0,\infty); H)$ such
  that $\lim_{t\to\infty}f(t) =: f_\infty$ exists,
  $f-f_\infty \in L^1([0,\infty); H)$ and $f_\infty \in
  R(\partial\vphi)$. For $x \in \overline{D(\partial\vphi)}$, let $u^x$ be
  a weak solution to
  \begin{align*}
    \frac{\d}{\d t} u^x &\in - \partial\vphi(u^x) + f,\\
    u(0) &= x .
  \end{align*}
  Then $\lim_{t\to\infty}u^x(t) =: u_\infty$ exists and
  \begin{equation}\label{eq:63}
    f_\infty \in \partial\vphi(u_\infty).
  \end{equation}
\end{prop}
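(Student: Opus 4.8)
The plan is to exploit the gradient-flow structure together with the compactness of the sublevel sets in a Lyapunov-type argument; throughout write $u := u^x$ for brevity and let $\eta(t) \in \partial\vphi(u(t))$ denote the section appearing in $\frac{\d}{\d t}u = -\eta + f$. Since $f_\infty \in R(\partial\vphi)$, I would first fix a point $\bar u$ with $f_\infty \in \partial\vphi(\bar u)$, i.e. an equilibrium of the autonomous limit equation, and show that the orbit stays bounded. Testing against $u - \bar u$ and using monotonicity of $\partial\vphi$ together with $f_\infty \in \partial\vphi(\bar u)$,
\[ \tfrac12 \frac{\d}{\d t}\norm{u - \bar u}^2 = -\sp{\eta - f_\infty}{u - \bar u} + \sp{f - f_\infty}{u - \bar u} \leq \norm{f - f_\infty}\,\norm{u - \bar u}, \]
so that $\frac{\d}{\d t}\norm{u - \bar u} \leq \norm{f - f_\infty}$; since $f - f_\infty \in L^1([0,\infty);H)$, this keeps $u$ inside a fixed ball around $\bar u$, yielding $\Lambda := \sup_{t}\norm{u(t)} < \infty$.

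Next I would extract a quantitative dissipation estimate from the modified energy $\Phi(t) := \vphi(u(t)) - \sp{f_\infty}{u(t)}$. The chain rule for $\vphi$ along the trajectory (see the obstacle below) gives $\frac{\d}{\d t}\Phi = \sp{\eta - f_\infty}{\tfrac{\d}{\d t}u}$; inserting $\tfrac{\d}{\d t}u = -(\eta - f_\infty) + (f - f_\infty)$ and applying Young's inequality yields
\[ \frac{\d}{\d t}\Phi(t) \leq -\tfrac12\norm{\eta(t) - f_\infty}^2 + \tfrac12\norm{f(t) - f_\infty}^2. \]
Because $f(t) \to f_\infty$, there is $t_0$ with $\norm{f - f_\infty} \leq 1$ on $[t_0,\infty)$, so $\norm{f - f_\infty}^2 \leq \norm{f - f_\infty}$ is integrable there; combined with the lower bound $\Phi \geq -\norm{f_\infty}\Lambda$ (from $\vphi \geq 0$ and boundedness of the orbit), integrating the inequality gives both $\int_{t_0}^\infty \norm{\eta(t) - f_\infty}^2\,\d t < \infty$ and $\sup_t \Phi(t) < \infty$. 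The latter bounds $\vphi(u(t)) = \Phi(t) + \sp{f_\infty}{u(t)}$. With $\norm{u(t)}$ and $\vphi(u(t))$ both bounded, the compactness hypothesis on the sublevel sets $M$ makes the orbit precompact, so its $\omega$-limit set is nonempty.

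Then I would identify and lock onto the limit. From $\int_{t_0}^\infty \norm{\eta - f_\infty}^2 < \infty$ there is a sequence $t_n \to \infty$ with $\eta(t_n) \to f_\infty$, and precompactness furnishes a subsequence with $u(t_n) \to u_\infty$; closedness of the graph of the maximal monotone operator $\partial\vphi$ then yields $f_\infty \in \partial\vphi(u_\infty)$, so $u_\infty$ is itself an equilibrium. To pass from this subsequential limit to convergence of the whole trajectory, I would re-run the first computation with $\bar u$ replaced by $u_\infty$, obtaining $\frac{\d}{\d t}\norm{u - u_\infty} \leq \norm{f - f_\infty}$, i.e. $t \mapsto \norm{u(t) - u_\infty}$ decreases up to an $L^1$-error. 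Given $\eps > 0$, choosing $n$ with $\norm{u(t_n) - u_\infty} < \eps/2$ and $\int_{t_n}^\infty \norm{f - f_\infty} < \eps/2$ forces $\norm{u(t) - u_\infty} \leq \eps$ for all $t \geq t_n$, which proves $u(t) \to u_\infty$ and hence \eqref{eq:63}.

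The step I expect to be the main obstacle is purely one of regularity: the chain rule for $\vphi$ along the trajectory and the pointwise differential (in)equalities above are available only for strong solutions, whereas $u^x$ is merely a weak solution and $f$ is only $L^1_{\mathrm{loc}}$. I would resolve this by noting that the entire asymptotic analysis takes place on $[t_0,\infty)$, where $f$ is bounded (since $f(t) \to f_\infty$) and hence locally in $L^2$; by the smoothing effect of subdifferential evolution equations the weak solution becomes a strong solution there, with $\vphi(u(\cdot))$ locally absolutely continuous, so that the chain-rule and regularity results underlying \cite{Brezis} apply and legitimise every step.
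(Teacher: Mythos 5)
Your argument is essentially correct, but there is nothing in the paper to compare it against line by line: Proposition \ref{det-conv} is not proved in the paper at all, it is quoted (in translation) from \cite[Théorème 3.11]{Brezis}, and the paper's contribution lies only in verifying its hypotheses later (Lemma \ref{det-conv-applied}). What you have written is in substance a correct reconstruction of the classical proof behind Brezis' theorem: work with the Lyapunov functional $\Phi(t)=\vphi(u(t))-\sp{f_\infty}{u(t)}$ (equivalently, translate $\vphi$ by the linear functional $\sp{f_\infty}{\cdot}$ so that equilibria become minimisers), obtain boundedness of the orbit and integrability of $\norm{\eta-f_\infty}^2$ from the dissipation inequality, use the compactness hypothesis \eqref{eq:12} to extract $u(t_n)\to u_\infty$ strongly, identify $f_\infty\in\partial\vphi(u_\infty)$ by strong--strong closedness of the maximal monotone graph, and then upgrade subsequential to full convergence via the contraction estimate against the equilibrium $u_\infty$; this yields \eqref{eq:63}. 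Your handling of the regularity obstacle is also the right one: on $[t_0,\infty)$ the forcing is essentially bounded, hence locally square integrable, and the smoothing theorem for subdifferential flows (\cite[Théorème 3.6]{Brezis}) makes the restricted solution strong with $\vphi(u(\cdot))$ locally absolutely continuous, so the chain rule (\cite[Lemme 3.3]{Brezis}) applies; one should only start the integration at some $t_0'>t_0$ to guarantee $\vphi(u(t_0'))<\infty$. Two minor simplifications are worth noting. First, your Steps 1 and 5 need no differentiation and no smoothing at all: the constant function $\bar u$ (resp.\ $u_\infty$) is a strong solution of the evolution equation with constant forcing $f_\infty$, and the standard stability estimate for weak solutions (\cite[Lemme 3.1]{Brezis}) gives directly $\norm{u(t)-\bar u}\leq\norm{u(s)-\bar u}+\int_s^t\norm{f(r)-f_\infty}\,\d r$ for $t\geq s$, which is all you use. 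Second, boundedness of the orbit on the initial interval $[0,t_0]$, where $f$ is merely $L^1_{\mathrm{loc}}$ and your differential inequality is not yet justified, follows anyway from continuity of the weak solution, so only the tail estimate matters.
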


\begin{Rem}\label{Well-posedn-Brez}
  Note that existence even of strong solutions to \eqref{eq:13} is
  guaranteed by \cite[Théorèmes 3.4 and 3.6]{Brezis} for $t\in [0,T]$,
  $T>0$. By uniqueness, we can extend the solution to $[0,\infty)$,
  analogous to Remark \ref{consistency}. In particular, for $t>0$ and
  $x\in \overline{D(\partial\vphi)}$ we have $u^x(t) \in D(\partial\vphi)$.
\end{Rem}

From the definition of $\g$ in \eqref{eq:21}, recall especially that
$g\in L^2$ and $g>1$ almost everywhere in
$\cO$. For $x \in \overline{D(\partial\vphi)}$, consider the deterministic evolution
equation
\begin{align}\label{eq:13}
  \begin{split}
    \frac{\d}{\d t} u^x &\in - \partial \vphi(u^x) + \g,\\
    u^x(0) &= x
  \end{split}
\end{align}
on $\Hd$, where $\varphi$ is defined as in \eqref{eq:14}.

\begin{lem} \label{det-conv-applied}
  Let $R>1$. For the initial states $x\equiv \pm
  R$, Proposition \ref{det-conv} can be
  applied to problem \eqref{eq:13} by replacing both $f(t)$ and $f_\infty$
  by $\g$. In this case,
  \begin{equation}\label{eq:52}
    u_\infty = ((-\Delta)^{-1}\g) \vee 1.
  \end{equation}
\end{lem}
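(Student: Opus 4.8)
The plan is to check the hypotheses of Proposition \ref{det-conv} with $H = \Hd$, $A = \partial\vphi$ and the constant forcing $f(t)\equiv f_\infty = \g$, and then to identify the resulting limit by convex duality. That $\vphi$ is proper, convex and lower-semicontinuous on $\Hd$ (and nonnegative, since $\psi\geq 0$) is available from the well-posedness analysis in \cite{Neuss}, so $\partial\vphi$ is maximal monotone. With $f$ constant the conditions on the forcing are immediate: $\g\in L^2\hookrightarrow\Hd$ gives $f\in\Lloc([0,\infty);\Hd)$, the limit $\lim_{t\to\infty}f(t)=\g$ trivially exists, and $f-f_\infty\equiv 0\in L^1([0,\infty);\Hd)$. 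Moreover $\pm R\in L^2$, and since $D(\vphi)=L^2$ is dense in $\Hd$ one has $\overline{D(\partial\vphi)}=\overline{D(\vphi)}^{\Hd}=\Hd$, so the constant data are admissible and $u^{\pm R}$ is a well-defined strong solution by Remark \ref{Well-posedn-Brez}. It thus remains to verify the compactness condition \eqref{eq:12}, to show $\g\in R(\partial\vphi)$, and to compute the limit.

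For the compactness of $M=\{u\in\Hd:\vphi(u)+\norm{u}_\Hd^2\leq C\}$, the key point is that $\vphi$ controls the $L^2$ norm. Splitting $\cO$ into $\{\abs{u}\leq 1\}$ and $\{\abs{u}>1\}$ and using $\psi(u)=\tfrac12(\abs{u}^2-1)$ on the latter yields the elementary bound $\norm{u}_2^2\leq 2\vphi(u)+2\abs{\cO}$. Hence $M$ is bounded in $L^2$, and as $L^2$ embeds compactly into $\Hd$ (by duality of the Rellich embedding $\Hsob\hookrightarrow L^2$ in one dimension), $M$ is precompact in $\Hd$; being closed in $\Hd$ by lower-semicontinuity of $\vphi$, it is compact.

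To treat $\g\in R(\partial\vphi)$ and identify the limit at once, I would use the convex-analytic equivalence that $\g\in\partial\vphi(u)$ holds iff $u$ minimises $\Psi(z)=\vphi(z)-\sp{\g}{z}_\Hd$. Writing $w=(-\Delta)^{-1}\g\in\Hsob$ and using the definition of the $\Hd$ inner product, $\sp{\g}{z}_\Hd=\int_\cO w z\,\d x$ for $z\in L^2$, so $\Psi(z)=\int_\cO(\psi(z)-wz)\,\d x$ decouples into a pointwise minimisation. Since $\g>1$ a.e., the function $w$ solves $-\Delta w=\g>0$ with zero boundary data and is therefore strictly positive a.e.\ in $\cO$ (by concavity in one dimension, or the strong maximum principle). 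For every $c>0$ the convex map $s\mapsto\psi(s)-cs$ has a unique minimiser, because its optimality condition $c\in\phi(s)$ has the single solution $s=c\vee 1$: for $c>1$ one is forced onto the branch $\phi(s)=s$, while for $0<c\leq 1$ only the filled jump $\phi(1)=[0,1]$ can meet $c$. Applying this with $c=w(x)$ shows $w\vee 1$ is the unique minimiser of $\Psi$; by the equivalence this is simultaneously the statement $\g\in R(\partial\vphi)$ and the fact that any $u$ with $\g\in\partial\vphi(u)$ equals $((-\Delta)^{-1}\g)\vee 1$.

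With all hypotheses verified, Proposition \ref{det-conv} yields that $u_\infty=\lim_{t\to\infty}u^{\pm R}(t)$ exists and satisfies $\g\in\partial\vphi(u_\infty)$; the uniqueness of the preimage above then forces $u_\infty=((-\Delta)^{-1}\g)\vee 1$, which is \eqref{eq:52}. I expect the identification step to be the main obstacle: one must justify the reduction of $\g\in\partial\vphi(u)$ to the decoupled pointwise problem (i.e.\ the structure of $\partial\vphi$ together with the $\Hd$-pairing) and, crucially, exploit the strict positivity of $w$ — guaranteed precisely by the non-degeneracy $\g>1$ — to render the pointwise inversion of the multivalued $\phi$ single-valued. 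By contrast, the compactness check is routine once the bound $\norm{u}_2^2\leq 2\vphi(u)+2\abs{\cO}$ is noted.
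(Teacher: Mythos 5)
Your proposal is correct, but it diverges from the paper's proof in the two substantive steps, and the divergence is worth recording. For the admissibility of the initial data $\pm R$, the paper does \emph{not} argue via density of $D(\vphi)$: it invokes the explicit characterisation of $\partial\vphi$ from Barbu (as $w=-\Delta v$ with $v\in\Hsob$, $v\in\phi(u)$ a.e.) and constructs a concrete sequence $u_n=\bigl(n(1-x)\wedge n(x+1)\wedge R\bigr)\vee 1\in D(\partial\vphi)$ converging to the constant $R$ in $\Hd$; the boundary behaviour is the issue, since any admissible selection $v\in\phi(u)$ must have zero trace. You instead appeal to the general convex-analytic identity $\overline{D(\partial\vphi)}=\overline{D(\vphi)}$ together with $D(\vphi)=L^2$ dense in $\Hd$; this is legitimate (it is a standard result, e.g.\ in Brezis' monograph, and should be cited as such), is shorter, and gives the stronger conclusion $\overline{D(\partial\vphi)}=\Hd$, at the price of being less self-contained. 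For the statement $\g\in R(\partial\vphi)$ and the identification \eqref{eq:52}, the paper again works through Barbu's characterisation: it sets $v=(-\Delta)^{-1}\g$, checks $v\in\phi(v\vee 1)$ a.e.\ using $v>0$ (strong maximum principle), concludes $\g\in\partial\vphi(v\vee 1)$, and then disposes of uniqueness in one sentence via strict monotonicity of $\phi|_{\R\setminus(-1,1)}$. Your route through the variational characterisation ($\g\in\partial\vphi(u)$ iff $u$ minimises $\vphi-\sp{\g}{\cdot}_\Hd$), the identity $\sp{\g}{z}_\Hd=\int_\cO wz\,\d x$, and the pointwise minimisation of $s\mapsto\psi(s)-w(x)s$ is a genuinely different mechanism: it produces existence and uniqueness of the preimage in a single stroke, and it makes explicit exactly where the non-degeneracy $\g>1$ (hence $w>0$ a.e.) enters, namely in rendering the pointwise minimiser unique. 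Two small points you should tighten if this were written out: cite the result $\overline{D(\partial\vphi)}=\overline{D(\vphi)}$ rather than asserting it, and in the decoupling step note explicitly that $w\vee 1\in L^2$ (via $\Hsob\hookrightarrow L^\infty$ in one dimension) and that a.e.-uniqueness of the pointwise minimiser forces uniqueness of the minimiser of the integral functional. Neither is a gap, merely missing bookkeeping.
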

\begin{proof}
  The functional $\vphi$ as defined in \eqref{eq:14} is obviously not
  constantly $\infty$. Furthermore, it is convex and lower-semicontinuous
  by \cite[Proposition 2.10]{Barbu}.

  In order to verify the compactness of the set $M$ defined in
  \eqref{eq:12}, we first show for $\tilde{C}\in\R$ that $M$ is a bounded subset of
  $L^2$. This is obvious for $\tilde{C}\leq 0$ such that we can restrict to $\tilde{C}>0$
  in the following. Indeed, if for $u\in\Hd$ $\vphi(u)\leq \tilde{C}<\infty$, then $u\in
  L^2$ by \eqref{eq:14}. Then, we can compute
  \begin{align*}
    \int_\cO u^2 \d x
    &\leq \abs{\cO} + \int_{\{\abs{u}\geq 1\}} (\abs{u} -1+1)^2\d x\\
    &\leq \abs{\cO} + \int_{\{\abs{u}\geq 1\}} (\abs{u}-1)^2 + 2(\abs{u} - 1)
      + 1\, \d
      x\\
    &\leq \abs{\cO} + \vphi(u) + 2\abs{\cO}^{\frac1{2}}\left(\int_{\{\abs{u}\geq 1\}}
      (\abs{u}-1)^2\d x\right)^{\frac1{2}} + \abs{\cO}\\
    &\leq 2\abs{\cO} + \vphi(u) +
      2\abs{\cO}^{\frac1{2}}\vphi^{\frac1{2}}(u) \leq C\,(1 + \tilde{C}) < \infty.
  \end{align*}
  Since the canonical embedding $L^2\hookrightarrow \Hd$ is compact, it
  follows that $\overline{M}$ is compact. As $\vphi$ is
  lower-semicontinuous, so is $\vphi + \norm{\cdot}_\Hd^2$, and thus $M$ is
  also closed. Hence, $M$ is compact, as required.

  We recall from
  \cite[Proposition 2.10]{Barbu} that $\partial\vphi$ can be characterised by
  \begin{displaymath}
    \partial\vphi = \left\{ [u,w]\in (\Hd\cap L^1)\times\Hd: w = -\Delta v,
      v\in\Hsob, v(x) \in \phi(u(x)) \text{ for a.\,e.\,}x\in\cO\right\},
  \end{displaymath}
  with
  \begin{displaymath}
    D(\partial\vphi) = \left\{u\in \Hd\cap L^1: \exists\, v\in\Hsob \text{ such
        that } v \in\phi(u) \text{ almost everywhere}\right\}.
  \end{displaymath}
  To show that the constant functions $\pm R$ are elements of
  $\overline{D(\partial\vphi)}$, we define for $n\in\N$
  \begin{displaymath}
    v_n := n(1-x) \wedge n(x+1) \wedge R \in \Hsob,
  \end{displaymath}
  and $u_n := v_n \vee 1$. We then have $u_n\in \Hd\cap L^1$ and $v_n \in
  \phi(u_n)$, and thus $u_n \in D(\partial \vphi)$. Since $u_n \to R$ in $\Hd$, we have that the constant function
  $R\in \overline{D(\partial\vphi)}$. For the constant function with value $-R$,
  analogous considerations apply.
  
  Finally, to show \eqref{eq:52}, we first prove that
  \begin{equation}\label{eq:65}
    u_\infty = ((-\Delta)^{-1}\g)\vee 1
  \end{equation}
  satisfies \eqref{eq:63} with
  $f_\infty$ replaced by $\g$.
  Setting $v := (-\Delta)^{-1}\g$, we have $v\in\Hsob$, as $\g$ was assumed
  to be in $L^2\subset\Hd$, and consequently $v\vee 1\in \Hd\cap
  L^1$. Furthermore, $v>0$ almost everywhere by the strong maximum principle (see
  \cite[Theorem 8.19]{Gilb-Trud}) and thus $v \in \phi(v\vee 1)$ a.\,e., such
  that $v\vee 1 \in D(\partial\vphi)$. Since additionally $g= -\Delta v$,
  we have $\g\in R(\partial\vphi)$ and $g\in \partial\vphi(v\vee 1)$.

  We conclude by noticing that \eqref{eq:65} is the only choice for
  $u_\infty$ such that \eqref{eq:52} is satisfied. This becomes clear by
  the strict monotonicity of $\phi|_{\R\setminus (-1,1)}$ and the strict
  positivity of $(-\Delta)^{-1}\g$ by the strong maximum principle.
\end{proof}

Similarly to Lemma \ref{recall-approx}, we can define approximations
$u^\xeps$ for equation \eqref{eq:13} by
\begin{align}\label{eq:16}
  \begin{split}
    \frac{\d}{\d t} u^\xeps_t&= \eps \Delta u^\xeps_t + \Delta
    \phi^\eps(u^\xeps_t) + \g\quad \text{for }t\in (0,S], \\
    u^\xeps_0 &= x,
  \end{split}
\end{align}
where $S>0$ and $\g$ still satisfies assumption \eqref{eq:21}. Analogous to
the approximation of $X^x$, there is a unique variational solution to \eqref{eq:16}, and if
$x\in\overline{D(\partial\vphi)}\cap L^2$, so that \eqref{eq:13} has a
strong solution, we obtain
\begin{equation}\label{eq:58}
  \sup_{t\in[0,S]} \norm{u^\xeps_t - u^x_t}_\Hd^2 \leq C(S)(\norm{x}_2^2 + 1)
\end{equation}
analogous to \eqref{eq:47}.

For these approximating deterministic equations, we need order-preservation
in the initial value. A partial order on $\Hd$ can be defined as follows:
\begin{Def}\label{Def-Hd-order}
  We write $u\leq v$ in $\Hd$, if for all
  $\eta\in\Hsob, \eta\geq 0$ almost everywhere, one has
  \begin{displaymath}
    u(\eta) \leq v(\eta).
  \end{displaymath}
\end{Def}

\begin{lem}\label{sandwich-lemma}
  Let $u,v,w\in\Hd$. Then $u\leq v\leq w$ in $\Hd$ implies
  \begin{displaymath}
    \norm{v}_\Hd \leq \norm{u}_\Hd + \norm{w}_\Hd.
  \end{displaymath}
\end{lem}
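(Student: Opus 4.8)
The plan is to pass to the $\Hd$-potentials and reduce everything to two one-sided gradient estimates obtained by testing the two order relations against the positive and negative parts of the middle element.

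First I would fix the standard Riesz representation of $\Hd$: writing $\tiu := (-\Delta)^{-1} u$, $\tiv := (-\Delta)^{-1} v$ and $\tiw := (-\Delta)^{-1} w$ for the potentials in $\Hsob$, the operator $(-\Delta)^{-1}: \Hd \to \Hsob$ is an isometric isomorphism, so $\norm{u}_\Hd = \norm{\nabla \tiu}_2$ (and likewise for $v$ and $w$), while the duality pairing becomes $p(\eta) = \int_\cO \nabla \tilde{p} \cdot \nabla \eta \,\d x$ for $p \in \Hd$, $\eta \in \Hsob$. In these terms the task is to show $\norm{\nabla \tiv}_2 \le \norm{\nabla \tiu}_2 + \norm{\nabla \tiw}_2$, and, by Definition \ref{Def-Hd-order}, the relations $u \le v$ and $v \le w$ read $\int_\cO \nabla(\tiv - \tiu)\cdot \nabla \eta \,\d x \ge 0$ and $\int_\cO \nabla(\tiw - \tiv)\cdot \nabla \eta \,\d x \ge 0$ for every $\eta \in \Hsob$ with $\eta \ge 0$ a.e.

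The decisive step is to split the middle potential through its lattice decomposition $\tiv = \tiv^+ - \tiv^-$ with $\tiv^\pm \in \Hsob$, $\tiv^\pm \ge 0$, using the orthogonality $\nabla \tiv^+ \cdot \nabla \tiv^- = 0$ a.e., so that $\norm{\nabla \tiv}_2^2 = \norm{\nabla \tiv^+}_2^2 + \norm{\nabla \tiv^-}_2^2$. I would then test the relation $v \le w$ against the admissible function $\eta = \tiv^+$: since $\int_\cO \nabla \tiv \cdot \nabla \tiv^+ \,\d x = \norm{\nabla \tiv^+}_2^2$, this yields $\norm{\nabla \tiv^+}_2^2 \le \int_\cO \nabla \tiw \cdot \nabla \tiv^+ \,\d x \le \norm{\nabla \tiw}_2 \norm{\nabla \tiv^+}_2$ by Cauchy--Schwarz, hence $\norm{\nabla \tiv^+}_2 \le \norm{\nabla \tiw}_2$. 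Symmetrically, testing $u \le v$ against $\eta = \tiv^-$ and using $\int_\cO \nabla \tiv \cdot \nabla \tiv^- \,\d x = -\norm{\nabla \tiv^-}_2^2$ gives $\norm{\nabla \tiv^-}_2 \le \norm{\nabla \tiu}_2$.

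Combining the two estimates, $\norm{\nabla \tiv}_2^2 \le \norm{\nabla \tiw}_2^2 + \norm{\nabla \tiu}_2^2 \le (\norm{\nabla \tiu}_2 + \norm{\nabla \tiw}_2)^2$, which is the claim (in fact a slightly stronger Pythagorean bound). The only place any real care is needed is the choice of test functions: one must use precisely the positive part of the middle element in the upper relation and its negative part in the lower relation, so that the left-hand sides collapse to $\pm\norm{\nabla \tiv^\pm}_2^2$; everything else is Cauchy--Schwarz together with the orthogonality of $\nabla \tiv^+$ and $\nabla \tiv^-$. I do not expect a genuine obstacle, only the need to invoke the standard facts that $\tiv^\pm \in \Hsob$ with the stated gradient identities (Stampacchia's theorem) and that $(-\Delta)^{-1}$ realises the isometry between $\Hd$ and $\Hsob$.
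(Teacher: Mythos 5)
Your proof is correct, and it takes a genuinely different route from the paper's. The paper argues entirely on the dual side: for an arbitrary test function $\eta\in\Hsob$ with $\norm{\eta}_\Hsob\leq 1$ it splits $\eta = \eta\wedge 0 + \eta\vee 0$, applies $u\leq v$ to the nonnegative function $-(\eta\wedge 0)$ and $v\leq w$ to $\eta\vee 0$, and so bounds $v(\eta)\leq u(\eta\wedge 0)+w(\eta\vee 0)\leq \norm{u}_\Hd+\norm{w}_\Hd$, using only that the lattice operations do not increase the $\Hsob$-norm. You instead pass through the Riesz isometry $(-\Delta)^{-1}$, decompose the potential of the \emph{middle} element as $\tiv=\tiv^+-\tiv^-$, and test each order relation against one of these parts, so that the gradient orthogonality $\nabla\tiv^+\cdot\nabla\tiv^-=0$ collapses the left-hand sides and Cauchy--Schwarz finishes. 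The two decompositions live on opposite sides of the duality (the paper splits the test function, you split the Riesz representative of $v$), and your version buys a strictly stronger conclusion, namely the Pythagorean bound $\norm{v}_\Hd^2\leq\norm{u}_\Hd^2+\norm{w}_\Hd^2$, while the paper's version is shorter and never needs the Riesz map. Both rest on the same standard Sobolev lattice facts (Stampacchia, or \cite[Corollary 2.1.8]{Ziemer}). The one point worth making explicit in your write-up is the norm convention: your isometry $\norm{p}_\Hd=\norm{\nabla(-\Delta)^{-1}p}_2$ requires that $\Hsob$ carries the gradient inner product rather than the full $H^1$ inner product; this is indeed the convention the paper uses implicitly (e.g.\ in identities of the form $\sp{u}{\Delta h}_\Hd=-\sp{u}{h}_{L^2}$), so your argument is consistent with the paper, but the reduction in your first paragraph silently depends on it.
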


\begin{proof}
  For arbitrary $\eta\in\Hsob, \norm{\eta}_\Hsob \leq 1,$ we can compute
  \begin{align*}
    v(\eta)
    &= v(\eta\wedge 0) + v(\eta\vee 0)\\
    &= -v(-(\eta\wedge 0)) + v(\eta\vee 0)\\
    &\leq -u(-(\eta\wedge 0)) + w(\eta\vee 0)\\
    &= u(\eta\wedge 0) + w(\eta\wedge 0)\\
    &\leq \norm{u}_\Hd + \norm{w}_\Hd,
  \end{align*}
  where for the last step we note that both $\eta\wedge 0$ and $\eta\vee 0$
  are $\Hsob$ functions with norm less than $\eta$ (see \eg \cite[Corollary
  2.1.8]{Ziemer}).
  \end{proof}

For the approximate deterministic dynamics governed by \eqref{eq:16}, we
then have the following comparison principle:
\begin{lem}\label{comp-principle}
  Let $x,y \in L^\infty \subseteq L^2$ and
  $x \leq y$ almost everywhere, and let $u^\xeps$ and $u^{y,\eps}$ be the solutions to
  \eqref{eq:16}
  with the corresponding initial values. Then
  \begin{displaymath}
    u^\xeps_t \leq u^{y,\eps}_t \quad \text{in }\Hd,\text{ for all }t>0.
  \end{displaymath}
\end{lem}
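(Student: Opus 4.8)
The plan is to prove the comparison principle by a standard energy argument applied to the positive part of the difference $z := u^{x,\eps} - u^{y,\eps}$, exploiting the monotonicity of $\phi^\eps$ and the regularising $\eps\Delta$-term. Since $x,y \in L^\infty \subseteq L^2$ and $\g \in L^2$, the variational solutions $u^{x,\eps}$ and $u^{y,\eps}$ lie in $\Hsob$ for almost every $t$ (analogous to \eqref{eq:38}), so the difference $z$ solves, $\d t$-almost everywhere,
\begin{align*}
  \frac{\d}{\d t} z_t &= \eps \Delta z_t + \Delta\left(\phi^\eps(u^{x,\eps}_t) - \phi^\eps(u^{y,\eps}_t)\right),\\
  z_0 &= x - y \leq 0 \text{ a.e.}
\end{align*}

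First I would test this equation against $(z_t)_+ \in \Hsob$ (using that the positive part of an $\Hsob$ function is again in $\Hsob$ by \cite[Corollary 2.1.8]{Ziemer}). The aim is to control $\frac{\d}{\d t}\,\tfrac12\norm{(z_t)_+}_2^2$, or more precisely the $\Hd$-energy of the positive part. Integrating the $\eps\Delta z$ term against $(z)_+$ in the $\Hd$ pairing yields a nonpositive contribution, since $\sp{\eps\Delta z}{(z)_+}_\Hd = -\eps\sp{z}{(z)_+}_{L^2} = -\eps\norm{(z)_+}_2^2 \leq 0$. For the nonlinear term, the key point is monotonicity: on the set where $z > 0$, i.e. where $u^{x,\eps} > u^{y,\eps}$, the monotonicity of $\phi^\eps$ (Lemma \ref{recall-Yosida}) gives $\phi^\eps(u^{x,\eps}) - \phi^\eps(u^{y,\eps}) \geq 0$, and pairing $\Delta(\phi^\eps(u^{x,\eps}) - \phi^\eps(u^{y,\eps}))$ against $(z)_+$ in $\Hd$ produces $-\sp{\phi^\eps(u^{x,\eps}) - \phi^\eps(u^{y,\eps})}{(z)_+'}_{L^2}$ after the appropriate integration by parts, which one shows to be nonpositive. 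Combining these, one obtains $\frac{\d}{\d t}\norm{(z_t)_+}_\Hd^2 \leq 0$ (or an analogous differential inequality with no positive forcing), so that $\norm{(z_t)_+}_\Hd^2 \leq \norm{(z_0)_+}_\Hd^2 = 0$, since $z_0 = x - y \leq 0$. This forces $(z_t)_+ = 0$, i.e. $u^{x,\eps}_t \leq u^{y,\eps}_t$ in $\Hd$ for all $t > 0$, in the sense of Definition \ref{Def-Hd-order}.

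The main obstacle is the rigorous justification of the chain rule and the integration by parts for the nonlinear term, since $(z)_+$ is only Lipschitz-composed with $z$ and the pairing is taken in the dual space $\Hd$ rather than $L^2$. Concretely, one must argue that $\sp{\Delta\left(\phi^\eps(u^{x,\eps}) - \phi^\eps(u^{y,\eps})\right)}{(z)_+}_\Hd$ reduces to an $L^2$-integral of the form $-\int_\cO \left(\phi^\eps(u^{x,\eps}) - \phi^\eps(u^{y,\eps})\right)\,\partial_x (z)_+\,\d x$ and that this integrand has a definite sign; here one uses that $\partial_x(z)_+ = \Ind{\{z>0\}}\partial_x z$ together with the fact that $\phi^\eps$ is piecewise affine and monotone, so its increment and the increment of its argument share the same sign pointwise. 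I would make this precise by an Itô/chain-rule argument for variational solutions exactly as in the proof of Lemma \ref{Contractivity-prop} (invoking \cite[Theorem 4.2.5]{Roeckner} in its deterministic form), where the same structure already appeared; the novelty here is only that the test function is the positive part rather than the full difference, so the monotonicity is used on the subset $\{z > 0\}$ instead of globally. Once the sign of each term is established, the conclusion follows immediately from Grönwall (or directly, since the derivative is nonpositive).
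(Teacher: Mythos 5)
There is a genuine gap, and it sits exactly at the step you defer to as ``the main obstacle'': your two sign computations are taken from two \emph{different} pairings, and in neither pairing do both halves of the argument hold simultaneously. Write $z_t = u^\xeps_t - u^{y,\eps}_t$ and $w_t = \phi^\eps(u^\xeps_t)-\phi^\eps(u^{y,\eps}_t)$. (a) If you use the $\Hd$--$\Hsob$ duality pairing --- the only one for which the tested quantity is a time derivative, namely $\frac{\d}{\d t}\tfrac12\norm{(z_t)_+}_2^2 = \dup{\Hd}{\partial_t z_t}{(z_t)_+}{\Hsob}$ --- then the nonlinear term is
\begin{displaymath}
\dup{\Hd}{\Delta w_t}{(z_t)_+}{\Hsob} = -\int_{\{z_t>0\}} \partial_x w_t\, \partial_x z_t \,\d x ,
\end{displaymath}
and this is \emph{not} sign-definite: monotonicity of $\phi^\eps$ gives the pointwise inequality $w_t z_t \ge 0$ (increments of \emph{values}), but says nothing about the pointwise sign of $\partial_x w_t\,\partial_x z_t$ (a statement about \emph{gradients}). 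For instance, on a set where $\abs{u^\xeps}<1$ and $u^{y,\eps} < -1-\eps$ (so that $z_t>0$ there), the integrand equals $-\tfrac{1}{1+\eps}\,\partial_x u^{y,\eps}\,\partial_x z_t$, which has no sign. (b) If instead you use the $\Hd$ inner product, then your sign claims are correct: $\sp{\eps \Delta z_t + \Delta w_t}{(z_t)_+}_\Hd = -\eps \norm{(z_t)_+}_2^2 - \sp{w_t}{(z_t)_+}_{L^2} \le 0$ (note the correct identity is $\sp{\Delta w}{(z)_+}_\Hd = -\sp{w}{(z)_+}_{L^2}$, not the expression $-\int w\,\partial_x(z)_+$ you wrote). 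But now the left-hand side $\sp{\partial_t z_t}{(z_t)_+}_\Hd$ is \emph{not} the derivative of $\tfrac12\norm{(z_t)_+}_\Hd^2$, nor of any Gr\"onwall-able functional: the chain rule would require moving the pointwise truncation $\Ind{\{z_t>0\}}$ through the inner product, and $\sp{u}{v}_\Hd = \int_\cO ((-\Delta)^{-1}u)\,v\,\d x$ is nonlocal, so this fails. This is the classical obstruction to proving comparison principles for porous-medium-type equations by naive $H^{-1}$ energy estimates. The proof of Lemma \ref{Contractivity-prop} does not meet this obstruction because there the test element is the full difference $z_t$, which is linear in $z_t$; your closing remark that the ``novelty here is only that the test function is the positive part'' is precisely the point at which the argument collapses.

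The known repair is an $L^1$-contraction argument, not an energy argument: test with $H_\delta(\eps z_t + w_t)$, where $H_\delta$ is a smooth nondecreasing approximation of $\Ind{(0,\infty)}$ with $H_\delta(0)=0$ --- the test function must be a smoothed \emph{sign}, and it must be composed with the difference of the \emph{nonlinearities} $\eps z_t + w_t$, so that its gradient appears quadratically as $\int H_\delta'(\eps z_t+w_t)\abs{\partial_x(\eps z_t + w_t)}^2\,\d x \ge 0$ --- and then pass $\delta \to 0$ using that $\{\eps z_t + w_t > 0\} = \{z_t > 0\}$ by strict monotonicity of $\eps\,\mathrm{Id} + \phi^\eps$, to conclude $\frac{\d}{\d t}\int_\cO (z_t)_+\,\d x \le 0$; the limit passage requires integrability of $\partial_t z$ beyond $L^2(0,T;\Hd)$ and is not free. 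The paper sidesteps all of this in one line: it observes that $u^\xeps$ is a weak solution in the sense of \cite[Chapter 5]{VazquezPME} with $\Phi = \eps\,\mathrm{Id}+\phi^\eps$, and invokes \cite[Theorem 5.7]{VazquezPME}, which is precisely the comparison principle you are trying to reprove.
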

\begin{proof}
  Note that $u^\xeps$ for $x\in L^\infty$ is also a weak solution in the sense of \cite[Chapter 5]{VazquezPME} with
  $\Phi = \eps \mathrm{Id} + \phi^\eps$. By \cite[Theorem
  5.7]{VazquezPME}, the claimed comparison principle holds.
\end{proof}

\begin{Cor} \label{Cor-order}
  Let $R>0$. As a consequence of Lemmas \ref{sandwich-lemma} and \ref{comp-principle},
  we have for $x\in L^\infty$, $\norm{x}_\infty \leq R$ and arbitrary $u\in\Hd$
  \begin{displaymath}
    \norm{u^\xeps_t - u}_\Hd \leq \norm{u^{R,\eps}_t-u}_\Hd + \norm{u^{-R,
        \eps}_t - u}_\Hd\quad \text{for }t\geq 0.
  \end{displaymath}
\end{Cor}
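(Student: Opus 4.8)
The plan is to chain together the two preceding lemmas after reducing the $L^\infty$-ball constraint to the two constant comparison states $\pm R$. First I would observe that the hypothesis $\norm{x}_\infty \leq R$ means precisely that $-R \leq x \leq R$ almost everywhere, where $\pm R$ denote the constant functions in $L^\infty$. Since $x, \pm R \in L^\infty\subseteq L^2$, the comparison principle of Lemma \ref{comp-principle} applies to both ordered pairs $(-R, x)$ and $(x, R)$, yielding
\begin{displaymath}
  u^{-R,\eps}_t \leq u^\xeps_t \leq u^{R,\eps}_t \quad\text{in }\Hd\text{ for all }t>0.
\end{displaymath}
For $t=0$ the same ordering holds directly, since the initial data $-R\leq x\leq R$ a.e.\ already give the corresponding order in $\Hd$: for $L^2$ functions, pointwise a.e.\ domination tested against a nonnegative $\eta\in\Hsob$ amounts to $\int_\cO(v-u)\eta\,\d x \geq 0$, which is exactly domination in the sense of Definition \ref{Def-Hd-order}.

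Next I would subtract the fixed element $u\in\Hd$ from all three terms. The order on $\Hd$ from Definition \ref{Def-Hd-order} is invariant under translation: if $a\leq b$ in $\Hd$, then for every $\eta\in\Hsob$ with $\eta\geq 0$ we have $(a-u)(\eta) = a(\eta) - u(\eta) \leq b(\eta) - u(\eta) = (b-u)(\eta)$, hence $a-u \leq b-u$ in $\Hd$. Applying this to the chain above gives
\begin{displaymath}
  u^{-R,\eps}_t - u \leq u^\xeps_t - u \leq u^{R,\eps}_t - u \quad\text{in }\Hd.
\end{displaymath}

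Finally I would invoke the sandwich estimate of Lemma \ref{sandwich-lemma} with the middle element $v = u^\xeps_t - u$, lower bound $u^{-R,\eps}_t - u$ and upper bound $u^{R,\eps}_t - u$, which directly yields
\begin{displaymath}
  \norm{u^\xeps_t - u}_\Hd \leq \norm{u^{-R,\eps}_t - u}_\Hd + \norm{u^{R,\eps}_t - u}_\Hd,
\end{displaymath}
as claimed. The argument is essentially bookkeeping; the only points needing a moment's care are the translation-invariance of the $\Hd$-order and the reduction of the $L^\infty$-ball constraint to the two constant states $\pm R$, both of which are immediate from the definitions, so I do not anticipate a genuine obstacle here.
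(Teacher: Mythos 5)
Your proposal is correct and follows exactly the paper's intended argument: reduce $\norm{x}_\infty\leq R$ to the a.e.\ ordering $-R\leq x\leq R$, transfer it to the $\Hd$-order of Definition \ref{Def-Hd-order}, apply Lemma \ref{comp-principle} to sandwich $u^\xeps_t$ between $u^{-R,\eps}_t$ and $u^{R,\eps}_t$, use translation invariance of the order, and conclude with Lemma \ref{sandwich-lemma}. The paper's own proof is just a terser version of the same chain (it only records the two observations about a.e.\ ordering and translation invariance), so your write-up is simply a more explicit rendering of it.
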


\begin{proof}
  It is enough to read off Definition \ref{Def-Hd-order} that $-R\leq x \leq
  R$ almost everywhere implies $-R\leq x \leq R$ in $\Hd$, and that the
  order is invariant under translation by a fixed element of $\Hd$.
\end{proof}

We now compare the approximations $u^\xeps$ to the solution
of the stochastic equation \eqref{eq:34}, with a noise conditioned on
suitable events. 

\begin{lem}\label{stability-approx}
  Let $R, S>0, 0<\beta\leq 1, x\in L^\infty, \norm{x}_\infty
  \leq R$ and let $u^\xeps$ be the solution to \eqref{eq:16}. Furthermore, let
  $X^\xeps$ be the solution to \eqref{eq:34} up to time $S$ with the
  same initial condition $x$. Assume that
  \begin{equation}\label{eq:17}
    \sup_{t\in[0,S] } \norm{W_t^B - t\g}_2\leq \beta,
  \end{equation}
  where for simplicity we write $W^B_t = B W_t$. Then for $0<\eps\leq 1$ we have
  \begin{displaymath}
    \norm{X_S^\xeps - u_S^\xeps}_\Hd\leq C(R,S)\beta.
  \end{displaymath}
\end{lem}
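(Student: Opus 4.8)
The plan is to remove the martingale part of $X^\xeps$ by subtracting the integrated deviation of the noise from its deterministic counterpart, thereby reducing the comparison to a \emph{pathwise} monotone evolution equation, and then to run an energy estimate in $\Hd$. Write $\Theta_t := W^B_t - t\g$, so that \eqref{eq:17} reads $\sup_{t\in[0,S]}\norm{\Theta_t}_2 \le \beta$; rewriting the drift of \eqref{eq:34} as $\g\,\d t + \d\Theta_t$, the only difference from \eqref{eq:16} is the extra (rough) forcing $\d\Theta_t$. Setting $D_t := X^\xeps_t - u^\xeps_t$ and $\tilde D_t := D_t - \Theta_t$, the Wiener increments cancel and $\tilde D$ solves $\frac{\d}{\d t}\tilde D_t = \Delta(\eps D_t + b_t)$ with $b_t := \phi^\eps(X^\xeps_t) - \phi^\eps(u^\xeps_t)$ and $\tilde D_0 = 0$. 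Since $X^\xeps$ and $u^\xeps$ take values in $\Hsob$ ($\P\otimes\d t$-a.s.\ by \eqref{eq:38} and the deterministic analogue of \eqref{eq:58}) and $\phi^\eps$ is Lipschitz (Lemma \ref{recall-Yosida}), the right-hand side lies in $\Hd$ and $\tilde D$ is an absolutely continuous $\Hd$-valued path, so that the chain rule on the Gelfand triple $\Hsob\hookrightarrow L^2\hookrightarrow\Hd$ applies pathwise.

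Using $\sp{\Delta w}{v}_\Hd = -\sp{w}{v}_{L^2}$ for $w\in\Hsob$, $v\in L^2$, together with $D_t = \tilde D_t + \Theta_t$, I would then compute
\[
  \tfrac12\tfrac{\d}{\d t}\norm{\tilde D_t}_\Hd^2 = -\eps\norm{\tilde D_t}_2^2 - \eps\sp{\Theta_t}{\tilde D_t}_{L^2} - \sp{b_t}{D_t}_{L^2} + \sp{b_t}{\Theta_t}_{L^2}.
\]
The first term is nonpositive, the third is nonpositive by monotonicity of $\phi^\eps$, and the second is harmless: Young's inequality against the first term gives $-\eps\sp{\Theta_t}{\tilde D_t}_{L^2}\le \tfrac\eps2\norm{\tilde D_t}_2^2 + \tfrac\eps2\beta^2$, which is controlled since $\eps\le 1$ and $\beta\le 1$.

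The main obstacle is the last cross term $\sp{b_t}{\Theta_t}_{L^2} = \sp{\phi^\eps(X^\xeps_t)-\phi^\eps(u^\xeps_t)}{\Theta_t}_{L^2}$. A naive Lipschitz bound is useless here, because the Lipschitz constant of the Yosida approximation $\phi^\eps$ blows up like $1/\eps$, whereas the estimate must be uniform in $\eps$. The key is the $\eps$-uniform growth bound $\abs{\phi^\eps(a)-\phi^\eps(b)}\le \abs{a-b}+C$, valid because $\phi^\eps$ is monotone with unit slope at infinity and uniformly bounded total jump height (Lemma \ref{recall-Yosida}). Splitting $\cO$ according to whether $\abs{\Theta_t}\le\abs{D_t}$ or not, on the first set the favourable monotone term $-\sp{b_t}{D_t}_{L^2}$ dominates the cross term pointwise, while on the second set one has $\abs{D_t}<\abs{\Theta_t}\le\beta$ and $\abs{b_t}\le\abs{D_t}+C\le \beta+C$, so the cross term is bounded there by $C(\norm{\Theta_t}_2^2+\norm{\Theta_t}_1)\le C\beta$ uniformly in $\eps$; the $R$-dependence ultimately enters through the a priori $L^2$-bounds on $u^\xeps$ (from $\norm{x}_\infty\le R$) and, via $X^\xeps_t = u^\xeps_t+\Theta_t+\tilde D_t$, on $X^\xeps$.

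Feeding these bounds into the energy identity produces a differential inequality for $\norm{\tilde D_t}_\Hd^2$, which I would integrate over $[0,S]$ starting from $\tilde D_0=0$; passing from $\tilde D_S$ back to $X^\xeps_S-u^\xeps_S$ via $\norm{\Theta_S}_\Hd\le C\norm{\Theta_S}_2\le C\beta$ then yields $\norm{X^\xeps_S-u^\xeps_S}_\Hd\le C(R,S)\beta$. I expect the discontinuity-induced cross term $\sp{b_t}{\Theta_t}_{L^2}$ — and in particular its control \emph{uniformly in the regularisation parameter} $\eps$ together with the delicate matching of the exact power of $\beta$, where the a priori bounds carrying the dependence on $R$ must be exploited — to be the crux of the proof.
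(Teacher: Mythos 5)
Your reduction is exactly the paper's: your $\tilde D_t = (X^\xeps_t - W^B_t) - (u^\xeps_t - t\g)$ is precisely the paper's transformed difference $Y^\xeps_t - v^\xeps_t$, both start from $\tilde D_0 = 0$, and the pathwise $\Hd$-energy identity with the two monotone terms discarded is the same computation. Where you genuinely diverge is the treatment of the cross term. The paper controls all terms involving $\Theta_r = W^B_r - r\g$ at once by Cauchy--Schwarz against $\norm{\Theta_r}_2 \leq \beta$, and must then spend the second half of its proof (\eqref{eq:42}--\eqref{eq:45}) establishing the a priori bound $\int_0^S \norm{X^\xeps_r}_2^2 + \norm{u^\xeps_r}_2^2\,\d r \leq C(R,S)$ via a second energy estimate based on the lower bound $\abs{\phi^\eps(z)} \geq \abs{z}/2$ for $\abs{z}\geq 1+\eps$ from \eqref{eq:40}; this a priori estimate is the only place where the hypothesis $\norm{x}_\infty \leq R$ enters. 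You instead exploit the $\eps$-uniform sublinearity $\abs{\phi^\eps(a) - \phi^\eps(b)} \leq \abs{a-b} + 2$ --- which is correct, but note that it needs the explicit form \eqref{eq:62} (slope at most $1$ outside two ramps of total rise $1$ each), not merely Lemma \ref{recall-Yosida} --- combined with splitting $\cO$ into $\{\abs{\Theta_t} \leq \abs{D_t}\}$, where the cross term is dominated pointwise by the monotone term $-b_t D_t$, and its complement. Your route is more elementary: it bypasses the paper's a priori estimate entirely, and in fact your constant comes out independent of $R$, which the paper's does not.

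Two slips, both repairable. (i) On the set $\{\abs{D_t} < \abs{\Theta_t}\}$ you assert $\abs{\Theta_t} \leq \beta$ pointwise; this is false, since $\beta$ bounds only $\norm{\Theta_t}_2$. The correct chain is the pointwise bound $\abs{b_t} \leq \abs{D_t} + C \leq \abs{\Theta_t} + C$ on that set, which after integration gives exactly your claimed estimate $\norm{\Theta_t}_2^2 + C\norm{\Theta_t}_1 \leq C\beta$, so nothing breaks. (ii) Integrating your differential inequality from $\tilde D_0 = 0$ yields $\norm{\tilde D_S}_\Hd^2 \leq C(S)\beta$, hence $\norm{X^\xeps_S - u^\xeps_S}_\Hd \leq C(S)\beta^{1/2}$, not the bound $C(R,S)\beta$ you announce at the end; the ``matching of the exact power of $\beta$'' that you single out as a crux is not actually achieved by your argument. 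You are in good company, though: the paper's own proof has the same feature (it also bounds the squared $\Hd$-norm linearly in $\beta$ and concludes only that the difference becomes arbitrarily small as $\beta \to 0$), and since the lemma is used solely through Lemma \ref{conv-uinfty} in the form ``choose $\beta$ so that the difference is smaller than $\delta/4$'', the power $\beta^{1/2}$ serves just as well.
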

\begin{proof}
  We consider the transformed processes
  \begin{align*}
    Y^\xeps_t &= X_t^\xeps - W^B_t\quad \text{and}\\
    v^\xeps_t &= u_t^\xeps - t\g,
  \end{align*}
  so that by
  \begin{displaymath}
    \norm{X_S^{x,\eps} - u_S^\xeps}_\Hd \leq \norm{Y^\xeps_S - v^\xeps_S}_\Hd +
    \norm{W^B_S - S\g}_\Hd,
  \end{displaymath}
  we can focus on $\norm{Y^\xeps_S - v^\xeps_S}_\Hd^2$ using \eqref{eq:17}
  and the continuity of the embedding $L^2\hookrightarrow\Hd$. For the
  following equalities, recall that $X^\xeps \in \Hsob$
  $\P\otimes \d t$--almost surely due to \eqref{eq:38}, and note that
  \begin{equation}\label{eq:41}
    \eps u_r^\xeps + \phi^\eps(u_r^\xeps) \in \Hsob 
  \end{equation}
  for almost every $r\in [0,S]$ by \cite[Theorem 5.7]{VazquezPME}. Hence
  \begin{align*}
    &\frac1{2}\norm{Y^\xeps_S - v^\xeps_S}_\Hd^2\\
    &= \int_0^S \sp{Y^\xeps_r - v^\xeps_r}{\Delta\left(\eps X_r^\xeps +
      \phi^\eps(X_r^\xeps)\right) - \Delta\left( \eps u^\xeps_r +
      \phi^\eps(u^\xeps_r)\right)}_\Hd \d r\\
    &= - \int_0^S \sp{Y^\xeps_r - v^\xeps_r}{\eps X_r^\xeps +
      \phi^\eps(X_r^\xeps) - \left(\eps u^\xeps_r + \phi^\eps(u^\xeps_r)\right)}_{L^2} \d r\\
    &= - \int_0^S \sp{Y^\xeps_r + W_r^B - (v^\xeps_r + r\g)}{\eps (Y^\xeps_r + W_r^B - (v^\xeps_r + r\g))}_{L^2} \d r\\
    &\quad - \int_0^S \sp{Y^\xeps_r + W_r^B - (v^\xeps_r+r\g)}{\phi^\eps(Y^\xeps_r
      + W^B_r) - \phi^\eps(v^\xeps_r + r\g)}_{L^2} \d r\\
    &\quad + \int_0^S \sp{W_r^B-r\g}{\eps (Y^\xeps_r + W_r^B - (v^\xeps_r + r\g)) +
      \phi^\eps(Y^\xeps_r + W^B_r) - \phi^\eps(v^\xeps_r + r\g)}_{L^2} \d r\\
    &\leq \int_0^S \norm{W_r^B-r\g}_2 \norm{\eps (Y^\xeps_r + W_r^B) + \phi^\eps(Y^\xeps_r
      + W^B_r) - \eps(v^\xeps_r + r\g) - \phi^\eps(v^\xeps_r + r\g)}_2 \d r\\
    &\leq \left(\int_0^S \norm{W_r^B-r\g}_2^2\d r\right)^{\frac1{2}}\\
    &\qquad \times \left(\int_0^S \left(\eps\norm{Y^\xeps_r + W_r^B}_2 + \norm{\phi^\eps(Y^\xeps_r
      + W^B_r)}_2 + \eps\norm{v^\xeps_r + r\g}_2 + \norm{\phi^\eps(v^\xeps_r +
      r\g)}_2\right)^2\d r\right)^{\frac1{2}}\\
    &\leq S^{\frac1{2}}\beta \left(4 \int_0^S \eps^2\norm{Y^\xeps_r + W_r^B}^2_2 + \norm{\phi^\eps(Y^\xeps_r
      + W^B_r)}^2_2 + \eps^2\norm{v^\xeps_r + r\g}_2^2 + \norm{\phi^\eps(v^\xeps_r + r\g)}_2^2\d r\right)^{\frac1{2}}.
  \end{align*}
  Note that the monotonicity of $\phi^\eps$ has been used for the first
  inequality. Provided that the last factor can be bounded for $R$ and $S$ fixed,
  letting $\beta \to 0$ can make the starting term arbitrarily small, as
  required.

  To see this boundedness, first notice by \eqref{eq:40} in Appendix
  \ref{Specific-Yosida} that $\abs{\phi^\eps(x)}\leq \abs{x}$ for all
  $x\in\R$, $\eps>0$, so that it is enough to prove suitable bounds on
  \begin{displaymath}
    \int_0^S\norm{Y^\xeps_r + W_r^B}^2_2\,\d r \quad\text{and}\quad
    \int_0^S\norm{v^\xeps_r + r\g}_2^2\,\d r.
  \end{displaymath}
  To this end, we can compute
  \begin{equation}\label{eq:42}
    \frac1{2}\norm{Y^\xeps_S}_\Hd^2
    = \norm{x}_\Hd^2 + \int_0^S \sp{\eps\Delta(X^\xeps_r) +
       \Delta\phi^\eps(X^\xeps_r)}{Y^\xeps_r}_\Hd\d r
   \end{equation}
   by \eqref{eq:38}, and further, noting $Y^\xeps_r\in L^2$ by \eqref{eq:38} and
   \eqref{eq:21},
   \begin{align}\label{eq:43}
     \begin{split}
       \eqref{eq:42}
       =&\norm{x}_\Hd^2 -  \int_0^S \sp{\eps X^\xeps_r +
         \phi^\eps(X^\xeps_r)}{Y^\xeps_r}_{L^2}\d r\\
       =& \norm{x}_\Hd^2 - \int_0^S \sp{\eps (Y^\xeps_r + W^B_r) +
         \phi^\eps(Y^\xeps_r + W^B_r)}{Y^\xeps_r + W^B_r}_{L^2}\d r\\
       &+ \int_0^S \sp{\eps (Y^\xeps_r + W^B_r) +
         \phi^\eps(Y^\xeps_r + W^B_r)}{W^B_r}_{L^2}\d r.
     \end{split}
   \end{align}
   From \eqref{eq:40} in Appendix \ref{Specific-Yosida}, we obtain the
   lower bound $\abs{\phi^\eps(x)} \geq \frac1{2}\abs{x}$ for $\abs{x} \geq 1+\eps$ and
   $\eps \leq 1$, so that for $u\in L^2$ we have the estimate
   \begin{equation}\label{eq:18}
    \norm{u}_2^2
    \leq \underbrace{\int_{\{\abs{u}\geq 1+\eps\}} 2u \phi^\eps(u)\,\d
      x}_{\leq 2\sp{u}{\phi^\eps(u)}_{L^2}} +
    4\abs{\cO}
    \leq \underbrace{\int_{\{\abs{u}\geq 1+\eps\}}4\phi^\eps(u)^2\,\d
      x}_{\leq 4\norm{\phi^\eps(u)}_2^2} +
    4\abs{\cO}.
  \end{equation}

   Using \eqref{eq:18} and Young's inequality for the last two summands,
   once weighted by $\frac1{2}$, we can continue by
   \begin{align}\label{eq:44}
     \begin{split}
     \eqref{eq:43}
     \leq& \norm{x}_\Hd^2 - \int_0^S \eps \norm{Y^\xeps_r+W^B_r}_2^2 +
     \frac1{2}\norm{Y^\xeps_r+W^B_r}_2^2 - C\,\d r\\
     &+ \int_0^S \frac{\eps}{2} \norm{Y^\xeps_r + W^B_r}_2^2 +
     \frac{\eps}{2} \norm{W^B_r}_2^2 +
     \frac1{4}\underbrace{\norm{\phi^\eps(Y^\xeps_r + W^B_r)}_2^2}_{\leq
       \norm{Y^\xeps_r+W^B_r}_2^2} +
     \norm{W^B_r}_2^2\,\d r\\
     \leq& \norm{x}_\Hd^2 - \frac1{4}\int_0^S \norm{Y^\xeps_r +
       W^B_r}_2^2\d r + \frac{3}{2}\int_0^S \norm{W^B_r}_2^2 + C\,\d r.
   \end{split}
  \end{align}
  We note that by \eqref{eq:17}, assumption \eqref{eq:21} and $\beta\leq 1$
  \begin{displaymath}
    \norm{W_r^B}_2 \leq \norm{W_r^B - r\g}_2 + \norm{r\g}_2 \leq \beta +
    S\norm{\g}_2 \leq C(S),
  \end{displaymath}
  such that \eqref{eq:44} yields, by dropping the left-hand side and
  relabelling the constants,
  \begin{equation}\label{eq:45}
    \int_0^S\norm{Y^\xeps_r + W_r^B}^2_2\,\d r \leq 4 C(S, \norm{x}_\Hd^2).
  \end{equation}
  To obtain a bound that only depends on $S$ and $R$, note that $x\in L^\infty,
  \norm{x}_\infty \leq R$ by assumption, such that
  \begin{displaymath}
    \norm{x}_\Hd \leq C\norm{x}_2 \leq 2\, C\abs{\cO}^{\frac1{2}}R,
  \end{displaymath}
  which, together with \eqref{eq:45}, yields the desired bound. A
  similar estimate for $\int_0^S\norm{v^\xeps_r + r\g}_2^2\,\d r$ can be
  obtained by analogous computations, using \eqref{eq:41} instead of
  \eqref{eq:38}.
\end{proof}

We need to ensure that \eqref{eq:17} is realised for each $\beta>0$ with
non-zero probability.
\begin{lem}\label{Support-prop}
  As in \eqref{eq:25} we denote
  \begin{displaymath}
    W^B_t = B W_t = \sum_{i=1}^\infty \beta_k(t) \xi_k,
  \end{displaymath}
  with $\sum_{k\in\N} \norm{\xi_k}_2^2 < \infty$. Let $\g$ be defined as in
  \eqref{eq:21}, and let the degeneracy assumption on $(\xi_k)$ in \eqref{eq:21}
  hold. Then for all
  $S\geq 0, \beta > 0$ we have
  \begin{displaymath}
    \P\left(\sup_{t\in[0,S]} \norm{W_t^B - t\g}^2_2\leq \beta \right)>0.
  \end{displaymath}
\end{lem}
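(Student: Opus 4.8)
The plan is to use the non-degeneracy assumption \eqref{eq:21} only through its essential consequence, that $\g$ lies in the span of the finitely many noise directions $\xi_1,\dots,\xi_m$. Since $t\g = \sum_{k=1}^m c_k t\,\xi_k$, I split
$$W^B_t - t\g = \underbrace{\sum_{k=1}^m\big(\beta_k(t)-c_k t\big)\xi_k}_{=:H_t} + \underbrace{\sum_{k>m}\beta_k(t)\,\xi_k}_{=:R_t},$$
where $H$ depends only on $\beta_1,\dots,\beta_m$ and $R$ only on $(\beta_k)_{k>m}$, so that $H$ and $R$ are independent. Writing $C_1 \ce \big(\sum_{k=1}^m\norm{\xi_k}_2^2\big)^{1/2}$ and using the mutual orthogonality of the $\xi_k$, one has on the event $E_H \ce \{\sup_{t\in[0,S]}\max_{1\le k\le m}\abs{\beta_k(t)-c_k t}\le \eta\}$ the bound $\sup_{t\in[0,S]}\norm{H_t}_2 \le \eta\,C_1$, while on $E_R \ce \{\sup_{t\in[0,S]}\norm{R_t}_2 \le \eta\}$ one trivially has $\sup_{t\in[0,S]}\norm{R_t}_2\le\eta$. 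Hence on $E_H\cap E_R$ we obtain $\sup_{t\in[0,S]}\norm{W_t^B - t\g}_2 \le (C_1+1)\eta$, and choosing $\eta \ce \sqrt{\beta}/(C_1+1)$ it remains to show $\P(E_H)>0$ and $\P(E_R)>0$; the claim then follows since $\P(E_H\cap E_R)=\P(E_H)\P(E_R)>0$. (For $S=0$ the statement is trivial, as $W_0^B=0$.)

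For the finite-dimensional part, independence of $\beta_1,\dots,\beta_m$ gives $\P(E_H)=\prod_{k=1}^m\P(\sup_{t\in[0,S]}\abs{\beta_k(t)-c_k t}\le\eta)$, so it suffices that each factor is strictly positive. The function $t\mapsto c_k t$ vanishes at $0$ and is absolutely continuous, hence lies in the Cameron--Martin space of one-dimensional Brownian motion on $[0,S]$; by the Cameron--Martin theorem (equivalently, by removing the drift via Girsanov and invoking the elementary small-ball bound $\P(\sup_{t\in[0,S]}\abs{\beta_k(t)}\le\eta)>0$) the probability that $\beta_k$ stays in the uniform tube of radius $\eta$ around $c_k t$ is strictly positive.

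The infinite-dimensional part is the main point. The tail $R_t = W_t^B - \sum_{k=1}^m\beta_k(t)\xi_k$ has continuous $L^2$-valued paths, being a difference of such, and with $\E\sup_{t\in[0,S]}\norm{R_t}_2^2<\infty$ (\eg by Doob's maximal inequality applied to each $\beta_k$ together with $\sum_{k}\norm{\xi_k}_2^2<\infty$) it defines a centered Gaussian random element of the separable Banach space $C([0,S];L^2)$. For a centered Gaussian measure on a separable Banach space the topological support equals the closure of its Cameron--Martin space and therefore contains the origin; consequently every open ball centered at $0$ has positive mass. As $\{h\in C([0,S];L^2):\sup_{t\in[0,S]}\norm{h(t)}_2<\eta\}$ is such a ball, $\P(E_R)\ge\P(\sup_{t\in[0,S]}\norm{R_t}_2<\eta)>0$. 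I expect this small-ball positivity to be the crux: a moment estimate alone cannot produce it, since the coefficients $(\xi_k)_{k>m}$ are fixed and the corresponding expectation need not be small, so one genuinely relies on the support structure of Gaussian measures rather than on a Markov-type bound.
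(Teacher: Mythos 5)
Your proof is correct, but the treatment of the infinite-dimensional tail is genuinely different from the paper's. You handle the whole tail $R_t=\sum_{k>m}\beta_k(t)\xi_k$ in one stroke by viewing it as a centered Gaussian random element of $C([0,S];L^2)$ and invoking the Cameron--Martin support theorem, so that every open ball around the origin has positive mass; the drift block $k\le m$ is treated by Girsanov essentially as in the paper (which, unlike you, actually proves the one-dimensional small-ball positivity $\P(\sup_{t\in[0,S]}\abs{\beta_1(t)}\le\eta)>0$, via a Kolmogorov backward equation and the strong maximum principle, rather than citing it). The paper instead stays elementary: it splits the tail once more at a level $m^*>m$, chooses $m^*$ so large that $\E\sup_{t\in[0,S]}\sum_{k>m^*}\abs{\beta_k(t)}^2\norm{\xi_k}_2^2\le \beta/3$ (Doob/Burkholder--Davis--Gundy together with $\sum_k\norm{\xi_k}_2^2<\infty$), so that Markov's inequality gives the far tail positive --- in fact near-one --- probability, while the finitely many intermediate modes $m<k\le m^*$ are controlled by the same scalar small-ball estimate as the drift block; independence then combines the three pieces. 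Both routes are sound: yours is shorter but leans on the support theorem for Gaussian measures on separable Banach spaces, whereas the paper needs only scalar small-ball positivity and a Chebyshev-type bound. One remark: your closing claim that ``a moment estimate alone cannot produce'' the tail bound is misleading. It is true for the full tail with $m$ fixed, but after the further truncation at $m^*$ the relevant moment \emph{is} small, and this is exactly how the paper avoids any appeal to the support structure of Gaussian measures.
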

\begin{proof}
  We use orthogonality of $(\xi_k)_k$ to  write, for $m^* > m$,
  \begin{align}\label{eq:28}
    \begin{split}
      \norm{W_t^B - t \g}^2_2
      &= \norm{\sum_{k=1}^m \xi_k(\beta_k(t) - t c_k)}^2_2
      + \norm{\sum_{k=m+1}^{m^*} \xi_k \beta_k(t)}^2_2 + \norm{\sum_{k=
          m^*+1}^\infty \xi_k \beta_k(t)}^2_2\\
      &= \sum_{k=1}^m \norm{\xi_k}_2^2\abs{\beta_k(t) - t c_k}^2
      + \sum_{k=m+1}^{m^*} \norm{\xi_k}^2_2 \abs{\beta_k(t)}^2 + \sum_{k= m^*+1}^\infty \norm{\xi_k}^2_2 \abs{\beta_k(t)}^2.
    \end{split}
  \end{align}
  For the first term, we note that the event
  \begin{equation}\label{eq:27}
    \max_{k\in\{1,\dots,m\}}\sup_{t\in[0,S]}\abs{\beta_k(t) - c_kt}^2
    \leq \frac{\beta}{3\sum_{k=1}^m \norm{\xi_k}_2},
  \end{equation}
  has positive probability by the following reasoning: As the $(\beta_k)_{k=1}^m$
  are independent, it is enough to show for a one-dimensional standard
  Brownian motion ($k\in\{1,\dots,m\}$) that
  \begin{equation}\label{eq:24}
    \P\left(\sup_{t\in[0,S]} \abs{\beta_k(t) - c_kt} \leq \eps\right) > 0
  \end{equation}
  for any fixed $S>0, \eps>0$. To see this, note that $\beta_k(t) -c_kt$ is
  again a standard Brownian motion with respect to some probability measure
  $\P_Q$, which is absolutely continuous to $\P$ by Girsanov's
  theorem. Thus, it is enough to show for a standard Brownian motion
  $\beta_1$ that
  \begin{equation}\label{eq:26}
    \P\left(\sup_{t\in[0,S]}\abs{\beta_1(t)} \leq \epsilon\right) > 0,
  \end{equation}
  as this is equivalent to
  \begin{displaymath}
    \P_Q\left(\sup_{t\in[0,S]} \abs{\beta_k(t) - c_kt} \leq \eps\right) >
    0,
  \end{displaymath}
  which by absolute continuity yields
  \begin{displaymath}
       \P\left(\sup_{t\in[0,S]} \abs{\beta_k(t) - c_kt} \leq
      \eps\right) > 0.
  \end{displaymath}
  In order to show \eqref{eq:26}, we first note that the exit time probability
  $\P(\sup_{t\in[0,S]}\abs{\beta_1(t)} < \eps) = p(0, S)$ solves a
  Kolmogorov backward equation on $[-\eps, \eps]\times[0,\infty)$, as shown
  \eg in \cite{Patie-Winter}, which reads
  \begin{eqnarray*}
    \partial_t p(x,t) &=& \frac1{2}\Delta p(x,t), \quad \text{on }(-\eps,\eps)\times(0,\infty),\\
    p(x,t=0) &\equiv& 1,\text{for }x\in(-\eps,\eps)\\
    p(x= \pm \eps,t) &=& 0,\text{for }t>0.
  \end{eqnarray*}
  By the strong maximum principle, we can conclude that $p(0,S)>0$ for
  arbitrary $S>0$. Thus, we have shown that \eqref{eq:27} has positive
  probability and thus
  \begin{displaymath}
    \P\left(\sum_{k=1}^m \norm{\xi_k}_2^2\abs{\beta_k - t c_k}^2 >
      \frac{\beta}{3}\right) > 0.
  \end{displaymath}

  For the third term in \eqref{eq:28}, we compute
  \begin{displaymath}
    \E\sup_{t\in[0,S]} \sum_{k>m^*} \abs{\beta_k(t)}^2\norm{\xi_k}_2^2
    \leq \sum_{k>m^*} \norm{\xi_k}_2^2 \E\sup_{t\in[0,S]}
    \abs{\beta_k(t)}^2 \leq 4S \sum_{k>m^*} \norm{\xi_k}_2^2 =: R(m^*) \searrow 0
  \end{displaymath}
  for $m^* \to \infty$, where we used the squared version of the Burkholder-Davis-Gundy
  inequality. Choosing $m^*$ so large that $R(m^*) \leq \frac{\beta}{3}$ we
  obtain
  \begin{displaymath}
    \P\left(\sup_{t\in[0,S]}\sum_{k>m^*}\norm{\xi_k}_2^2\abs{\beta_k(t)}^2\leq
      \frac{\beta}{3}\right) \geq 1
    - \frac{R(m^*)}{\frac{\beta}{3}} > 0.
  \end{displaymath}
  Having chosen $m^*$ in this way, we can now conclude by \eqref{eq:26} that
  also for the second term of \eqref{eq:28} we have
  \begin{displaymath}
    \P\left(\sum_{k=m+1}^{m^*} \norm{\xi_k}^2_2 \abs{\beta_k}^2 \leq
      \frac{\beta}{3}\right) > 0,
  \end{displaymath}
  which proves the claim by independence.
\end{proof}

Combining the results up to now, we can state:
\begin{lem}\label{conv-uinfty}
  Let $\delta>0, R > 1$ and let $g\in L^2$ satisfy assumption
  \eqref{eq:21}. Recall $u_\infty$ from Lemma \ref{det-conv-applied} as the
  long-time limit of solutions $u^{R}$, $u^{-R}$ to \eqref{eq:13}. Then
  there exist $\gamma, S>0$ such that for every initial value
  $x \in C_\delta(R)$, where $C_\delta(R)$ is the $\delta$-neighbourhood of
  $B_R^\infty(0)$ in $\Hd$, we have
  \begin{displaymath}
    \P(\norm{X^x_S - u_\infty}_\Hd < 2\delta) \geq \gamma.
  \end{displaymath}
\end{lem}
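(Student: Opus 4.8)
The plan is to reduce to an $L^\infty$ initial datum by contractivity, and then to show that the \emph{single} event on which the noise stays close to the deterministic forcing already drives $X^{\,\cdot}_S$ near $u_\infty$; the positive probability $\gamma$ is then just the probability of that event. All constants will be chosen depending only on $R$ and $\delta$, which is what makes the estimate uniform over $C_\delta(R)$.

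First I would fix the horizon $S$ using the deterministic convergence. By Lemma \ref{det-conv-applied} the control solutions $u^{R}$ and $u^{-R}$ both converge in $\Hd$ to $u_\infty$, so I can choose $S>0$ with
\[
\norm{u_S^{R} - u_\infty}_\Hd + \norm{u_S^{-R} - u_\infty}_\Hd < \tfrac{\delta}{2}.
\]
With $S$ fixed, the constant $C(R,S)$ of Lemma \ref{stability-approx} is determined, and I pick $\beta\in(0,1]$ so small that $C(R,S)\beta < \tfrac{\delta}{2}$. Finally I set $\gamma := \P\!\left(\sup_{t\in[0,S]} \norm{W_t^B - t\g}_2 \leq \beta\right)$, which is strictly positive by Lemma \ref{Support-prop} (applied with $\beta^2$ in place of $\beta$).

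Now fix $x\in C_\delta(R)$ and choose $v\in B_R^\infty(0)$ with $\norm{x-v}_\Hd<\delta$. On the event $A := \{\sup_{t\in[0,S]}\norm{W_t^B - t\g}_2\leq \beta\}$, which has probability $\gamma$, I would combine the stability estimate of Lemma \ref{stability-approx} (valid for every $\eps\in(0,1]$ since $\norm{v}_\infty<R$) with the order-comparison Corollary \ref{Cor-order} applied with $u=u_\infty$ and $t=S$, obtaining, for all $\eps\in(0,1]$,
\[
\norm{X_S^{v,\eps} - u_\infty}_\Hd \leq C(R,S)\beta + \norm{u_S^{R,\eps} - u_\infty}_\Hd + \norm{u_S^{-R,\eps} - u_\infty}_\Hd .
\]
The crucial point is that $A$ does not depend on $\eps$, so this bound holds simultaneously for all $\eps$ on $A$. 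I then pass to the limit $\eps\to 0$: for the right-hand side, $u_S^{\pm R,\eps}\to u_S^{\pm R}$ in $\Hd$ by \eqref{eq:58} (the extremal data $\pm R$ give strong solutions by Lemma \ref{det-conv-applied}); for the left-hand side, $X_S^{v,\eps}\to X_S^{v}$ in $L^2(\Om,\Hd)$ by \eqref{eq:47}, hence almost surely along a subsequence. On the full-measure subset of $A$ where this a.s.\ convergence holds, the inequality survives and, by the choices of $S$ and $\beta$, yields $\norm{X_S^{v} - u_\infty}_\Hd < \delta$, so that $\P(\norm{X_S^{v}-u_\infty}_\Hd<\delta)\geq \gamma$.

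It remains to return from $v$ to $x$. By the almost sure contractivity of Lemma \ref{Contractivity-prop}, $\norm{X_S^{x}-X_S^{v}}_\Hd\leq \norm{x-v}_\Hd<\delta$ almost surely, so on the good event the triangle inequality gives $\norm{X_S^{x}-u_\infty}_\Hd<2\delta$, whence $\P(\norm{X_S^{x}-u_\infty}_\Hd<2\delta)\geq\gamma$, as claimed. I expect the main obstacle to be the interchange in the previous paragraph: one must verify that the uniform-in-$\eps$ stability bound, which lives only at the level of the approximations $X^{v,\eps}$ and $u^{\pm R,\eps}$, survives the passage $\eps\to 0$ \emph{inside} the fixed event $A$. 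This is precisely what the $\eps$-independence of $A$ allows, together with the fact that the comparison principle lets one replace the bound for the data $v$ by the two convergent extremal controls $u^{\pm R,\eps}$ before taking the limit.
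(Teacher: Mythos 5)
Your proposal is correct, and it assembles the same ingredients as the paper (Lemma \ref{det-conv-applied} to fix $S$, Lemma \ref{stability-approx} to fix $\beta$, Lemma \ref{Support-prop} for positivity, Corollary \ref{Cor-order} to compare with the extremal controls $u^{\pm R,\eps}$, and Lemma \ref{Contractivity-prop} to pass from $v\in B_R^\infty(0)$ to $x\in C_\delta(R)$), but it closes the argument differently. The paper fixes a \emph{single} small $\eps$, chooses it so that $\E\sup_{r\in[0,S]}\norm{X^{x,\eps}_r - X^x_r}_\Hd \leq \gamma\delta/8$, and then uses Markov's inequality and a union bound to combine the three error sources; this forces the bookkeeping factor $\gamma = \tfrac23\,\P\bigl(\sup_{t\in[0,S]}\norm{W^B_t - t\g}_2^2 \leq \beta\bigr)$. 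You instead exploit that the noise event $A$ is independent of $\eps$: on $A$ the stability-plus-comparison bound holds for every $\eps\in(0,1]$, and you pass to the limit $\eps\to 0$ pathwise inside $A$, using a.s.\ convergence of $X_S^{v,\eps}$ along a subsequence (from \eqref{eq:47}) and deterministic convergence of $u_S^{\pm R,\eps}$ (from \eqref{eq:58}). This yields the cleaner conclusion $\gamma = \P(A)$ with no union bound and no tuning of $\eps$ against $\gamma$; the paper's version, by contrast, is more quantitative and never needs subsequence extraction. One small imprecision in your write-up: the claim that the stability bound holds ``simultaneously for all $\eps$ on $A$'' involves, strictly speaking, an uncountable intersection of the exceptional null sets attached to each $\eps$ (the bound of Lemma \ref{stability-approx} is an almost-sure statement for each fixed $\eps$). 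This is harmless in your argument, since you only ever use the bound along the countable subsequence $\eps_n$ furnished by the a.s.\ convergence, so you should intersect the countably many null sets there; as stated, though, the sentence overreaches slightly.
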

\begin{proof}
  Recall that $u^R$, $u^{-R}$ are well-defined by Remark
  \ref{Well-posedn-Brez} and Lemma \ref{det-conv-applied}. According to
  Lemma \ref{det-conv-applied}, we can choose $S>0$ such that we have
  \begin{equation}\label{eq:15}
    \max\left\{\norm{u^{R}(t) - u_\infty}_\Hd, \norm{u^{-R}(t) - u_\infty}_\Hd\right\} \leq \frac{\delta}{8}\quad \text{for all
    }t\geq S.
  \end{equation}
  Let $u^\xeps$ be defined as in Lemma \ref{stability-approx}. As shown in
  this Lemma, we can choose $0<\beta\leq 1$ such that
  \begin{equation}
    \sup_{t\in[0,S]} \norm{W_t^B - t\g}_2 \leq \beta\quad\text{implies }
    \norm{X_S^\xeps - u_S^\xeps}_\Hd < \frac{\delta}{4},
  \end{equation}
  uniformly for all $\eps\in (0, 1]$, $x\in B_R^\infty(0)$. We then define
  \begin{equation}\label{eq:20}
    \gamma := \frac{2}{3}\ \Plr{\sup_{t\in[0,S]} \norm{W_t^B - t\g}^2_2\leq \beta},
  \end{equation}
  which is strictly positive by Lemma \ref{Support-prop}. We then choose
  $\eps\in(0,1]$ small enough such that for $u^{R,\eps}$ and $u^{-R,\eps}$ as in \eqref{eq:16} we have
  \begin{equation}\label{eq:32}
    \max\left\{\norm{u^{R, \eps}_S - u^{R}_S}_\Hd, \norm{u^{-R, \eps}_S - u^{-R}_S}_\Hd\right\} \leq \frac{\delta}{8},
  \end{equation}
  which is possible by \eqref{eq:58}, and such that
  \begin{equation}\label{eq:31}
    \E\sup_{r\in[0,S]}\norm{X^\xeps_r- X_r^x}_\Hd \leq \frac{\gamma\delta}{8}
  \end{equation}
  holds uniformly for $x\in B_R^\infty(0)$ by \eqref{eq:47} (note that the squared
  form in \eqref{eq:47} is a stronger statement than needed for \eqref{eq:31} by
  Jensen's inequality). For every $x\in B_R^\infty(0)$, this leads to
  \begin{equation}
    \Plr{\norm{X^x_S - X_S^\xeps}_\Hd \leq \frac{\delta}{4}} \geq 1 - \frac{\gamma}{2},
  \end{equation}
  and, by Corollary \ref{Cor-order}, \eqref{eq:15} and \eqref{eq:32}, to
  \begin{align*}
    \norm{u^\xeps_S - u_\infty}_\Hd
    &\leq \norm{u^{R,\eps}_S-u_\infty}_\Hd + \norm{u^{-R,\eps}_S -
      u_\infty}_\Hd\\
    &\leq \norm{u^{R,\eps}_S -u^R_S}_\Hd + \norm{u^R_S - u_\infty}_\Hd +
      \norm{u^{-R,\eps}_S - u^{-R}_S}_\Hd + \norm{u^{-R}_S - u_\infty}_\Hd\\
    &\leq 4\, \frac{\delta}{8} = \frac{\delta}{2}.
  \end{align*}
  Hence, still for $x\in B_R^\infty(0)$, we can conclude,
  \begin{align*}
    \Plr{\norm{X^x_S - u_\infty}_\Hd< \delta}
    &\geq \Plr{\norm{X^x_S - X_S^\xeps}_\Hd < \frac{\delta}{4}\text{ and }
      \norm{X_S^\xeps - u_S^\xeps}_\Hd < \frac{\delta}{4}}\\
    &= 1 - \Plr{\norm{X^x_S - X_S^\xeps}_\Hd \geq \frac{\delta}{4} \text{ or }
      \norm{X_S^\xeps - u_S^\xeps}_\Hd \geq \frac{\delta}{4}}\\
    &\geq 1 - \Plr{\norm{X^x_S - X_S^\xeps}_\Hd \geq \frac{\delta}{4}} -
      \Plr{\norm{X_S^\xeps - u_S^\xeps}_\Hd \geq \frac{\delta}{4}}\\
    &\geq \Plr{\norm{X_S^\xeps -u_S^\xeps}_\Hd < \frac{\delta}{4}} -
      \frac{\gamma}{2}\\
    &\geq \P\left(\sup_{t\in[0,S]} \norm{W_t^B - t\g}^2_2\leq \beta \right)
      - \frac{\gamma}{2} = \gamma.
  \end{align*}
  The claim for $x\in C_\delta(R)$ follows immediately by Lemma \ref{Contractivity-prop}.
\end{proof}
\begin{proof}[Proof of Theorem \ref{main-result}]
  Lemma \ref{Markov-prop}, Remark \ref{existence-canonical-process} and
  Remark \ref{stoch-continuity} prove all requirements of Proposition
  \ref{KPS} except \eqref{eq:56}. To see this remaining statement, we
  estimate for $0<\rho<1$ and $R(\rho)$ given in Lemma
  \ref{lem-energy-estimate}
  \begin{align*}
    &\liminf_{T\to\infty} \frac1{T} \int_0^T P_r\left(x,B_{2\delta}^\Hd(u_\infty)\right)\d r\\
    &= \liminf_{T\to\infty} \frac1{T} \int_0^T P_{r+S}\left(x,B_{2\delta}^\Hd(u_\infty)\right)\d r\\
    &= \liminf_{T\to\infty} \frac1{T} \int_0^T \int_\Hd P_S\left(y,
      B_{2\delta}^\Hd(u_\infty)\right)\,P_r(x,\d y)\, \d r\\
    &\geq \liminf_{T\to\infty} \frac1{T} \int_0^T \int_{C_\delta(R(\rho))}
      \underbrace{P_S\left(y, B_{2\delta}^\Hd(u_\infty)\right)}_{\geq \gamma\text{
      by Lemma \ref{conv-uinfty}}}\,P_r(x,\d y)\, \d r\\
    &\geq \gamma \liminf_{T\to\infty} \frac1{T} \int_0^T P_r(x,
      C_\delta(R(\rho)))\,\d r > \gamma\,(1- \rho) > 0,
  \end{align*}
  where we used the semigroup property of $(P_t)_{t\geq 0}$ and, for the
  last step, Lemma \ref{lem-energy-estimate}. The result then follows by
  Theorem \ref{KPS}.
\end{proof}

\appendix

\section{Solutions to monotone evolution equations}
For the reader's convenience, we cite and translate \cite[Definition 3.1]{Brezis}:
\begin{Def}\label{Def-det-solns}
  Let $H$ be a Hilbert space, $f\in L^1([0,T]; H)$,
  $A: H\supseteq D(A) \to H$ a maximal monotone operator. A function
  $u\in C([0,T]; \Hd)$ is called a \textbf{strong solution} to
  \begin{equation}\label{eq:36}
    \frac{\d}{\d t}  u \in -Au + f,
  \end{equation}
  if $u$ is absolutely continuous on compact subsets of $(0,T)$ (which
  implies that $u$ is differentiable almost everywhere in $(0,T)$) and for
  almost all $t\in (0,T)$
  \begin{displaymath}
    u(t) \in D(A)
  \end{displaymath}
  and
  \begin{displaymath}
    \frac{\d u}{\d t}(t) \in -Au(t) + f(t).
  \end{displaymath}

  $u\in C([0,T]; \Hd)$ is called a \textbf{weak solution} to \eqref{eq:36}
  if there are sequences $f_n\in L^1([0,T]; H)$ and $u_n \in C([0,T]; H)$
  $(n\in\N)$ such that $u_n$ is a strong solution of the equation
  \begin{displaymath}
    \frac{\d}{\d t} u_n \in -Au_n + f_n,
  \end{displaymath}
  $f_n\to f$ in $L^1([0,T]; H)$ and $u_n\to u$ uniformly
  in $[0,T]$ for $n\to \infty$.
\end{Def}

\begin{Rem}
  We observe that each strong solution is also a weak solution.
\end{Rem}

\section{Yosida approximation for the specific function $\phi$}\label{Specific-Yosida}
Recall from section \ref{sec:setting} that the multivalued function
$\phi: \R\to\R$ is defined as the maximal monotone extension of
\begin{displaymath}
  \R\ni x \mapsto x \Ind{\{\abs{x}>1\}}.
\end{displaymath}
We want to explicitly calculate its resolvent function $R^\eps: \R\to\R$
and its Yosida approximation $\phi^\eps:
\R\to\R$. For theoretical details, see \cite[Appendix D]{Neuss}. 

The resolvent $R^\eps(x)$ is defined as the solution $s$ to
\begin{equation}\label{eq:39}
  s + \eps \phi(s) \ni x.
\end{equation}
Note that \eqref{eq:39} has exactly one solution by the maximal
monotonicity of $\phi$. For $x\in [-1,1]$ we have
\begin{displaymath}
  0\in \phi(x),
\end{displaymath}
thus \eqref{eq:39} is solved by $s=x$. Consequently $R^\eps(x) = x$.

For $x\in(1, 1+\eps]$ we have
\begin{displaymath}
  \frac{x-1}{\eps} \in [0,1] = \phi(1).
\end{displaymath}
Thus, $s=1$ solves the equation by
\begin{displaymath}
  x = 1 + \eps\frac{x-1}{\eps} \in 1 + \eps\phi(1),
\end{displaymath}
which yields $R^\eps(x) = 1$. If $x\in [-1-\eps, 1)$, the same argument
yields $R^\eps(x) = -1$.

For $\abs{x}>1+\eps$, we have $\abs{\frac{x}{1+\eps}} > 1$ such that
\begin{displaymath}
  \frac{x}{1+\eps} + \eps\phi\left(\frac{x}{1+\eps}\right) = \frac{x}{1+\eps} + \eps
  \frac{x}{1+\eps} = x,
\end{displaymath}
yielding $R^\eps(x) = \frac{x}{1+\eps}$. By definition of the Yosida
approximation,
\begin{displaymath}
  \phi^\eps(x) = \frac{x-R^\eps(x)}{\eps},
\end{displaymath}
it is now easy to conclude that
\begin{equation}\label{eq:62}
  \phi^\eps(x) =
  \begin{cases}
    0, \quad &\abs{x}\leq 1\\
    \frac{x-1}{\eps} &x\in (1, 1+\eps]\\
    \frac{x+1}{\eps} &x\in [-1-\eps, 1)\\
    \frac{x}{1+\eps} &\abs{x}>1+\eps
  \end{cases}.
\end{equation}
In particular, for $\eps\leq1$ and $\abs{x} \geq 1+\eps$, we observe that
\begin{equation}\label{eq:40}
  \abs{\phi^\eps(x)} \geq \frac{\abs{x}}{2}.
\end{equation}

\pdfbookmark{References}{references}

\bibliographystyle{abbrv}
\bibliography{mybooks}

\begin{flushleft}
\small \normalfont
\textsc{Marius Neuß\\
Max--Planck--Institut f\"ur Mathematik in den Naturwissenschaften\\
04103 Leipzig, Germany}\\
\texttt{\textbf{marius.neuss@mis.mpg.de}}
\end{flushleft}

\end{document}